\renewcommand{\baselinestretch}{1.1}
\newtheorem{Satz}{Satz} 
\newtheorem{Example}[Satz]{Example}
\newtheorem{Lemma}[Satz]{Lemma}
\newtheorem{Theorem}[Satz]{Theorem}
\newtheorem{Corollary}[Satz]{Corollary}
\newtheorem{Definition}[Satz]{Definition}
\newtheorem{Proposition}[Satz]{Proposition}
\newtheorem{Remark}[Satz]{Remark}
\newcommand{\ncm}{\textstyle}
\newcommand{\bfcite}[2]{{\bf\cite[\rm #2]{#1}}}
\newcommand{\bfcit}[1]{{\bf\cite{#1}}}
\newcommand{\minus}{\smallsetminus}
\newcommand{\ih}[1]{\overset{\raisebox{-1mm}{$\sscm\vee$}}{#1}}
\newcommand{\s}[1]{\tilde{#1}} 
\newcommand{\Res}{\operatorname{Res}}
\newcommand{\ru}[1]{\rule[#1mm]{0mm}{0mm}}
\newcommand{\sscm}{\scriptscriptstyle}
\newcommand{\scr}{\scriptsize}
\newcommand{\EE}{\mathrm{E}}
\newcommand{\Z}{\mathds{Z}}
\newcommand{\Q}{\mathds{Q}}
\newcommand{\aufgl}[1]{\stackrel{\mbox{\scr #1}}{=}}
\newcommand{\val}{\operatorname{v}\!}
\renewcommand{\phi}{\varphi}
\renewcommand{\epsilon}{\varepsilon}
\newcommand{\vsp}[1]{\vspace*{#1mm}}
\newcommand{\hsp}[1]{\hspace*{#1mm}}
\newcommand{\Sylv}{\operatorname{Sylv}}
\newcommand{\bq}{\begin{quote}\begin{footnotesize}}
\newcommand{\eq}{

                 \end{footnotesize}\end{quote}} 
\newcommand{\ti}{\times}
\newcommand{\ubrt}{\underbracket[0.5pt]} 
\newcommand{\kommentar}[1]{}
\DeclareMathOperator{\GL}{GL}
\begin{document}


\title{A resultant for Hensel's Lemma}
\author{Juliane Dei\ss ler}
\maketitle

\begin{footnotesize}

\bq
Let $R$ be a complete discrete valuation ring with maximal ideal generated by $\pi$. Let $f(X)\in R[X]$ be a monic polynomial with nonzero discriminant $\Delta(f)$. Let 
$s \ge \val_\pi(\Delta(f)) + 1$. Suppose given a factorisation
of $f(X)$ in $(R/\pi^s R)[X]$ into several factors, not necessarily coprime in $(R/\pi R)[X]$. We lift it uniquely to a factorisation of $f(X)$ in $R[X]$. This generalises the 
Hensel-Rychl\'ik Lemma, which
covers the case of two factors. We work directly with lifts of factorisations into several factors and avoid having to iterate factorisations into two factors.
For this purpose we define a resultant for several polynomials in one variable as determinant of a generalised Sylvester matrix.
\eq 

\renewcommand{\thefootnote}{\fnsymbol{footnote}}
\footnotetext[0]{MSC2010: 13B25.}
\renewcommand{\thefootnote}{\arabic{footnote}}

\renewcommand{\baselinestretch}{0.7}%
\parskip0.0ex%
\tableofcontents%
\parskip1.2ex%
\renewcommand{\baselinestretch}{1.0}%
\end{footnotesize}%

\setcounter{section}{-1}

\section{Introduction}

In this introduction, by a polynomial we understand a monic polynomial.

Let $R$ be a complete discrete valuation ring and $\pi \in R$ a generator of its maximal ideal. 

\subsection{Resultant of several polynomials}
\label{ss_Res}

Classically, one defines the resultant of two polynomials in $R[X]$ as the determinant of their Sylvester matrix; cf. {\sc van der Waerden} \bfcite{vdW60}{\S 34}.

Here, for polynomials $g_{(1)}(X), \dots, g_{(n)}(X) \in R[X]$, where $n\ge 1$, the resultant $\Res(g_{(1)},\dots,g_{(n)})$ 
is defined to be the determinant of the generalised Sylvester matrix $\Sylv(g_{(1)},\dots,g_{(n)})$. This matrix contains the coefficients of the products 
$\prod_{j\in[1,n]\minus\{k\}} g_{(j)}(X)$, where $1\le k\le n$, ordered in a similar way as in the classical Sylvester matrix; cf. Definition~\ref{Def1}. 

We obtain the product formula
$
\Res(g_{(1)} \,,\, \dots \,,\, g_{(n)}) = \prod_{1\le\, k\, <\, \ell\,\le n} \Res(g_{(k)} \,,\, g_{(\ell)}) \, ,
$
expressing our resultant as a product of classical resultants. Since in our application to Hensel lifting, 
the generalised Sylvester matrix $\Sylv(g_{(1)},\dots,g_{(n)})$ itself will appear, it would not have been possible to work only with this product formula. 

Suppose given a polynomial $f(X)\in R[X]$ with discriminant $\Delta(f)\ne 0$. Let \mbox{$s\ge\val_\pi(\Delta(f)) + 1$.} 
As shall be explained in \S\ref{SecGenCase} below, we will start a Hensel lifting process with a factorisation
$
f(X) \;\equiv_{\pi^s}\; \prod_{k\in[1,n]} g_{(k)}(X)\,,
$
that is, with a factorisation of precision $s$, into factors $g_{(k)}(X)$, which are not necessarily coprime modulo $\pi$.
In this context, the product formula for the resultant yields the inequality
$2\, \val_{\pi}(\Res(g_{(1)} \,,\, \dots \,,\, g_{(n)})) \,\leq\, \val_{\pi}(\Delta(f))\,$; cf.\ Proposition~\ref{bewertung}. Technically speaking, this inequality is the reason
why we may start the process with said precision. In summary, the resultant machinery will supply our Hensel lifting process with a starting precision that depends only on 
the polynomial $f(X)$ to be factorised.

\subsection{Applications to Hensel's Lemma}
\label{App_Hensel's_Lemma}

\subsubsection{General case}
\label{SecGenCase}

Hensel's Lemma in the classical sense~\bfcite{Ko97}{4.4.2} has, in rudimentary form, already been known to {\sc Gauss} \bfcite{Ga76}{\S 374}; cf.\ \bfcite{Fr07}{\S 3.6}.
{\sc Hensel} developed a more sophisticated version~\bfcite{He04}{\S 4, p.\ 80}, known today as Hensel-Rychl\'ik Lemma.  
We generalise in Theorem~\ref{KochHensel} the Hensel-Rychl\'ik Lemma from the case of two factors to the case of an arbitrary number of factors.

Let $f(X)\in R[X]$ such that $\Delta(f)\ne 0$. Let $s\ge\val_\pi(\Delta(f)) + 1$. Choose a factorisation 
\[
f(X) \;\equiv_{\pi^s}\; \prod_{k \in [1,n]} g_{(k)}(X)\; , 
\]
where $n\ge 1$, i.e.\ a factorisation 
modulo $\pi^s$ into factors $g_{(k)}(X)\in R[X]$ of degree $\ge 1$, which are not necessarily coprime modulo $\pi$. Write $t := \val_\pi(\Res(g_{(1)}\,,\,\dots\,,\,g_{(n)}))$. 
Then there exist unique polynomials 
$\ih{g}_{(1)}(X)$, \dots\ , $\ih{g}_{(n)}(X)$ in $R[X]$ congruent, respectively, to \linebreak 
$g_{(1)}(X)$, \dots, $g_{(n)}(X)$ modulo $\pi^{s-t}$ such that 
\[
f(X) \;=\; \prod_{k \in [1,n]} \ih{g}_{(k)}(X)\; .
\] 

So to find a factorisation of a polynomial $f(X)$ in $R[X]$, we start with a factorisation of precision~$s$, satisfying a lower bound depending only on $f(X)$, and lift it to a 
factorisation of $f(X)$ in $R[X]$. Here, to lift means to replace the old factors $g_{(k)}(X)$, factoring modulo $\pi^s$, by new factors $\ih{g}_{(k)}(X)$, factoring exactly, such 
that the new factors are congruent to the old factors modulo~$\pi^{s - t}$. Concerning the connection from new to old, $s$ is the precision ``one might hope for'' -- but from that we have
to subtract $t$, which thus plays the role of a ``potential defect''; this $t$, in turn, is bounded above depending only on $f(X)$, viz.\ $t\le\val_\pi(\Delta(f))/2$. 
Such a defect actually occurs in examples, cf.\ \S~\ref{SecEx}.

The inductive step of the proof of Theorem~\ref{KochHensel} is contained in Lemma~\ref{KochHenselStepPrae}.
The arguments for that step I have learnt from {\sc Koch} \bfcite{Ko97}{4.4.3, 4.4.4, 4.4.5}.

In Example~\ref{ExComparison} we suppose given a factorisation $f(X) \equiv_{\pi^s} g_{(1)}(X) \cdot g_{(2)}(X) \cdot g_{(3)}(X)$ into three factors, in order 
to compare the result of a single application of 
Lemma~\ref{KochHenselStepPrae} to three factors with the result of two subsequent applications of Lemma~\ref{KochHenselStepPrae} to two factors. We determine that both 
methods are essentially equally good.

\subsubsection{Particular case $f(X) \equiv_\pi X^M$}
\label{SecPartCase}

In \S~\ref{SecSp} we investigate our generalisation of the Hensel-Rychl\'ik Lemma in a particular case. Here, slightly better bounds than in the general case hold. Namely, the bound for the
starting precision and the bound for the defect can be lowered somewhat, compared to the general case.

Let $f(X)$ be a polynomial in $R[X]$ with $\deg f =: M$ and $f(X) \equiv_\pi X^M$. Let $n\ge 1$ and $g_{(1)}(X),\, \dots,\, g_{(n)}(X)\in R[X]$ of 
degree $\ge 1$, ordered such that $\deg g_{(1)} \le \dots \le \deg g_{(n)}$\,. Again, we write $t := \val_\pi(\Res(g_{(1)}\,,\,\dots\,,\,g_{(n)})$. Moreover, we write 
$t' := t - \sum_{j\in [1,n-1]} \big((n-j)(\deg g_{(j)}) - 1\big)$. Now, suppose that $f(X) \equiv_{\pi^s} \prod_{k \in [1,n]} g_{(k)}(X)$ for some $s \ge t + t' + 1$. 
Then there exist unique polynomials $\ih{g}_{(1)}(X),\,\dots,\,\ih{g}_{(n)}(X)\,\in\,R[X]$ congruent, respectively, to $g_{(1)}(X),\,\dots,\,g_{(n)}(X)$ modulo $\pi^{s-t'}$ 
such that $f(X) = \prod_{k \in [1,n]} \ih{g}_{(k)}(X)$. Cf.\ Theorem~\ref{KochHenselSp}.

The proof of Theorem~\ref{KochHenselSp} is similar to the respective proof in the general case. We refrained from attempting 
to produce an assertion that covers both the general Theorem~\ref{KochHensel} and the more particular Theorem~\ref{KochHenselSp}, for it probably 
would have obscured the proof of Theorem~\ref{KochHensel}.

Examples~\ref{Ex_u37}~and~\ref{Ex_u41} show that $t' < t$ may occur. 

The inductive step for the proof of Theorem~\ref{KochHenselSp} is contained in Lemma~\ref{KochHenselStepPraeSp}.
In Example~\ref{ExComparisonSp}, we suppose given a factorisation $f(X) \equiv_{\pi^s} g_{(1)}(X) \cdot g_{(2)}(X) \cdot g_{(3)}(X)$ into three factors, 
ordered such that $\deg g_{(1)} \le \deg g_{(2)} \le \deg g_{(3)}\,$, in order to compare the result of a single application of Lemma~\ref{KochHenselStepPraeSp} to three factors with the 
result of two subsequent applications of 
Lemma~\ref{KochHenselStepPraeSp} to two factors. Under the present hypothesis $f(X) \equiv_\pi X^M$, we determine that the former method yields a somewhat more 
precise result than the latter method.

\subsection{Acknowledgements}
\label{SecAck}

To illustrate the theory we consider in \S~\ref{SecEx} some polynomials with cofficients in $\Z_p$ for a prime number $p$ using the computer algebra system 
{\sc Magma} \bfcit{Magma}.

I thank the referee of an earlier version for arguments that considerably simplified \S~\ref{SecRes} and for pointing out the reference~\bfcit{FPR02}; cf.\ Remark~\ref{ReferentenMethode}.
 
\subsection{Notations}
\setlength{\parskip}{0mm}
\begin{itemize}\setlength{\itemsep}{0mm}
\item Given $a,\,b\,\in\,\Z$, we denote by $[a,b] := \{z\in\Z\, :\, a\le z\le b\}\subseteq\Z$ the integral interval.
\item Given an integral domain $R$, a prime element $\pi\in R$ with $\pi \ne 0$ and $x\in R\minus\{ 0\}$, we denote 
$\val_\pi(x) := \max\{\,i\in\Z_{\ge 0} \,:\, \text{$\pi^i$ divides $x$}\,\}$. Moreover, $\val_\pi(0) := +\infty$.
\item We denote by $\EE_m$ the unit matrix of size $m\ti m$.
\item Given a commutative ring $R$ and elements $x,\,y,\,u\,\in\,R$, we write $x\equiv_u y$ for $x - y \in uR$.
\item For the zero polynomial $0$, we put $\deg 0 := -\infty$.
\end{itemize}
\setlength{\parskip}{0.9mm}

\section{Resultants}
\label{SecRes}

Let $R$ be an integral domain. Let $\pi \ne 0$ be a prime element of $R$.

\subsection{A lemma}
\label{SecLem}

\begin{Definition}
\label{DefB}
Suppose given $z\ge 0$. Suppose given $s\ge 0$. Suppose given $u(X) = \sum_{i\in [0,s]} u_i X^i$. Let
\[
B_{z,\,z+s}(u) \;\;\; :=\;\;\;
\left(
\begin{array}{cccccccccc}
 u_0 & u_1 & \cdots & \cdots & \cdots & u_s    &        &        &        &                 \\
     & u_0 & u_1    & \cdots & \cdots & \cdots & u_s    &        &        &                 \\
     &     & u_0    & u_1    & \cdots & \cdots & \cdots & u_s    &        &                 \\
     &     &        & \ddots & \ddots &        &        &        & \ddots &                 \\
     &     &        &        & u_0    & u_1    & \cdots & \cdots & \cdots & u_s             \\
\end{array}
\right)
\;\;\; \in\;\;\; R^{z\ti (z+s)}\; .
\]
\end{Definition}

\begin{Lemma}
\label{LemBfBgBfg}
Suppose given $z\ge 0$. Suppose given $s,\,t\,\ge\, 0$. Suppose given $u(X) = \sum_{i\in [0,s]} u_i X^i$. Suppose given $v(X) = \sum_{i\in [0,t]} v_i X^i$. 
Then 
\[
B_{z,\,z+s}(u)\cdot B_{z+s,\,z+s+t}(v) \; =\; B_{z,\,z+s+t}(uv)\; . 
\]
\end{Lemma}

\begin{proof}
Write $u(X) = \sum_{i\ge 0} u_i X^i$ with $u_i = 0$ for $i\ge s+1$ and $v(X) = \sum_{i\ge 0} v_i X^i$ with $v_i = 0$ for $i\ge t+1$. 
Then $u(X)\cdot v(X) = \sum_{j\ge 0} \left(\sum_{i\in [0,j]} u_i v_{j-i}\right) X^j$.

Suppose given $p\in [1,z]$ and $q\in [1,z+s+t]$. If $p > q$, then the entry at position $(p,q)$ is zero in the left hand side and in the right hand side matrix. If $p\le q$, then
the entry at position $(p,q)$ of $B_{z,\,z+s}(u) B_{z+s,\,z+s+t}(v)$ equals $\sum_{i\in [0,q-p]} u_i v_{q-p-i}\,$, which equals the entry at position $(p,q)$ of $B_{z,\,z+s+t}(uv)$.
\end{proof}

\subsection{A resultant}
\label{SecARes}

Let $n\in\Z_{\ge 1}\,$. Suppose given monic polynomials
$g_{(k)} = g_{(k)}(X) = \sum\limits_{i\in[0,m_{(k)}]} g_{(k)i}X^{i} \in R[X]$,
where $m_{(k)} := \deg g_{(k)} \ge 1$, for $k\in [1,n]$. Denote $M := \smash{\sum\limits_{j\in[1,n]}} m_{(j)}\,$. Denote $M_{(k)} := M - m_{(k)}$ and
\[
\smash{\prod\limits_{j\in[1,n]\minus\{k\}}} g_{(j)}(X) \;=:\; \sum\limits_{i\in[0,M_{(k)}]} a_{(k)i}X^{i}
\]
for $k\in [1,n]$.

Let $K$ be the field of fractions of $R$. Let $L$ be a splitting field for $\prod_{k\in [1,n]} g_{(k)}(X) \,\in\, K[X]$. 

\begin{Definition}
\label{Def1}\rm
Let
\[
 \Sylv(g_{(1)},\dots,g_{(n)}) \;\;:=\;\;
\left(
\begin{array}{ccccccc}
 a_{(1)0} & \cdots & \cdots   & \cdots & a_{(1)M_{(1)}}\hsp{-3} &        &         \\
          & \ddots &          &        &                        & \ddots &         \\
          &        & a_{(1)0} &\cdots  &\cdots                  & \cdots & a_{(1)M_{(1)}} \hspace{2mm}\smash{\raisebox{7mm}{$\left.\ru{9.5}\right\}\text{\scr$m_{(1)}$ rows}$}}\hspace*{-24.5mm} \\
 a_{(2)0} & \cdots & \cdots   & \cdots & a_{(2)M_{(2)}}\hsp{-3} &        &         \\
          & \ddots &          &        &                        & \ddots &         \\
          &        & a_{(2)0} & \cdots & \cdots                 & \cdots & a_{(2)M_{(2)}} \hspace{3mm}\smash{\raisebox{7mm}{$\left.\ru{9.5}\right\}\text{\scr$m_{(2)}$ rows}$}}\hspace*{-23.5mm}\\
 \vdots   &        & \vdots   &        & \vdots                 &        & \vdots  \\
 \vdots   &        & \vdots   &        & \vdots                 &        & \vdots  \\
a_{(n)0}  & \cdots & \cdots   & \cdots & a_{(n)M_{(n)}}\hsp{-3} &        &         \\
          & \ddots &          &        &                        & \ddots &         \\
          &        & a_{(n)0} & \cdots & \cdots                 & \cdots & a_{(n)M_{(n)}} \hspace{4mm}\smash{\raisebox{7mm}{$\left.\ru{9.5}\right\}\text{\scr$m_{(n)}$ rows}$}}\hspace*{-22mm}\\
\end{array}\right) \hsp{20}\in\;\; R^{M\ti M}\; .
\]
Let
\[
\Res(g_{(1)},\dots,g_{(n)}) \;\;:=\;\; \det \Sylv(g_{(1)},\dots,g_{(n)}) \;\in\; R
\]
be the {\it resultant} of $g_{(1)}(X) \,,\dots,\, g_{(n)}(X)$.  
\end{Definition}

Note that
\[ 
\Sylv(g_{(1)},\dots,g_{(n)})
\;=\;
\left(
\begin{array}{c}
B_{m_{(1)},\,M}\!\left(\prod\limits_{j\in[1,n]\minus\{1\}} g_{(j)}(X)\right)\ru{-6} \\\hline
B_{m_{(2)},\,M}\!\left(\prod\limits_{j\in[1,n]\minus\{2\}} g_{(j)}(X)\right)\ru{-6}\ru{8} \\\hline
\vdots\ru{6} \\
\vdots\ru{-4} \\\hline
B_{m_{(n)},\,M}\!\left(\prod\limits_{j\in[1,n]\minus\{n\}} g_{(j)}(X)\right)\ru{8} \\
\end{array}
\right)\;.
\]

In the case $n = 1$, we have $\Res(g_{(1)}(X)) = 1$, for $\Sylv(g_{(1)}) = \EE_{m_{(1)}}\,$.

In the case $n = 2$, our resultant coincides with the resultant found in the literature, e.g.\ in \linebreak 
\bfcite{Co82}{\S 7.4, (4)}, because it is obtained by a row and column reordering that leaves the determinant unchanged, for we need 
$m_{(1)}m_{(2)} + \lfloor\frac{m_{(1)} + m_{(2)}}{2}\rfloor + \lfloor\frac{m_{(1)}}{2}\rfloor + \lfloor\frac{m_{(2)}}{2}\rfloor \equiv_2 0$ transpositions.

\begin{Lemma}[cf.\ \bfcite{Co82}{\S 7.4, Th.~2.(iv), Cor.}]
\label{LemCohn}
Suppose given $\,u(X),\,v(X),\,w(X)\,\in\, R[X]$ monic. 
\begin{itemize}

\item[\rm (1)] Choose a field $C$ containing $R$ as a subring such that $v(X) = \prod_{i\in [1,s]} (X - \alpha_i)$ and\linebreak 
$w(X) = \prod_{j\in [1,t]} (X - \beta_j)$ in $C[X]\,$, where $s := \deg u$
and $t := \deg v$. 

Then
$
\;\Res(v,w) \;=\; \prod\limits_{(i,j)\,\in\, [1,s]\ti [1,t]} (\alpha_i - \beta_j)\; .
$
\item[\rm (2)] We have 
$
\;\Res(uv,w) = \Res(u,w)\cdot\Res(v,w)\; .
$ 
\end{itemize}
\end{Lemma}

\begin{Lemma}
\label{LemReferee}
\hsp{1}We have $\;\;\Res(g_{(1)} \,,\, \dots \,,\, g_{(n)}) \;\;=\;\; \prod\limits_{1\le\, k\, <\, \ell\,\le n} \Res(g_{(k)} \,,\, g_{(\ell)})\;\;$.
\end{Lemma}

\begin{proof}
We proceed by induction on $n\ge 1\,$; basing it at $n = 1$, where both sides equal $1$.

Suppose given $n\ge 2$. Suppose the assertion known for $n - 1$. Then
\[
\begin{array}{l}
\Sylv(g_{(1)},\dots,g_{(n)})
\;=\;
\left(
\begin{array}{c}
B_{m_{(1)},\,M}\!\left(\prod\limits_{j\in[1,n]\minus\{1\}} g_{(j)}(X)\right)\ru{-6}       \\\hline
B_{m_{(2)},\,M}\!\left(\prod\limits_{j\in[1,n]\minus\{2\}} g_{(j)}(X)\right)\ru{-6}\ru{8} \\\hline
\vdots\ru{6}\ru{-3}                                                                       \\\hline
B_{m_{(n)},\,M}\!\left(\prod\limits_{j\in[1,n]\minus\{n\}} g_{(j)}(X)\right)\ru{8}        \\
\end{array}
\right) \vsp{2} \\
\;\aufgl{L.\,\ref{LemBfBgBfg}}\;
\left(
\begin{array}{c|c}
B_{m_{(1)},\,M_{(n)}}\!\left(\prod\limits_{j\in[1,n-1]\minus\{1\}} g_{(j)}(X)\right)\ru{-6}           & \\\hline
B_{m_{(2)},\,M_{(n)}}\!\left(\prod\limits_{j\in[1,n-1]\minus\{2\}} g_{(j)}(X)\right)\ru{-6}\ru{8}     & \\\hline
\vdots\ru{-3}                                                                                         & \\\hline
B_{m_{(n-1)},\,M_{(n)}}\!\left(\prod\limits_{j\in[1,n-1]\minus\{n-1\}} g_{(j)}(X)\right)\ru{-6}\ru{8} & \\\hline
                                                                                                      & \EE_{m_{(n)}} \ru{8}\ru{-5}\\
\end{array}
\right) \cdot
\left(
\begin{array}{c}
\\
\\
\\
B_{M_{(n)},\,M}(g_{(n)}(X)) \\
\\
\\
\\\hline
B_{m_{(n)},\,M}\!\left(\prod\limits_{j\in[1,n-1]} g_{(j)}(X)\right)\ru{8}
\end{array}
\right) \; .
\end{array}
\]
Taking determinants, this yields
\[
\begin{array}{rcl}
\Res(g_{(1)} \,,\, \dots \,,\, g_{(n)}) 
& =                             & \Res(g_{(1)} \,,\, \dots \,,\, g_{(n-1)})\cdot\Res(\prod_{j\in[1,n-1]} g_{(j)}\,,\,g_{(n)})                                                      \vsp{1} \\
& \aufgl{L.\ \ref{LemCohn}.(2)} & \Res(g_{(1)} \,,\, \dots \,,\, g_{(n-1)})\cdot \prod_{j\in[1,n-1]} \Res(g_{(j)}\,,\,g_{(n)})                                                     \vsp{1} \\
& \aufgl{induction}             & \Big(\prod\limits_{1\le\, k\, <\, \ell\,\le n-1} \Res(g_{(k)} \,,\, g_{(\ell)})\Big)\cdot\Big(\prod_{j\in[1,n-1]} \Res(g_{(j)}\,,\,g_{(n)})\Big) \vsp{1} \\
& =                             & \prod\limits_{1\le\, k\, <\, \ell\,\le n} \Res(g_{(k)} \,,\, g_{(\ell)})\; .                                                                             \\
\end{array}
\]
\end{proof}

Lemma~\ref{LemReferee} together with Lemma~\ref{LemCohn}.(1) gives the

\begin{Corollary}
 \label{res=prod}
Write $g_{(k)}(X) \,=:\, \prod\limits_{i\in[1,m_{(k)}]} (X-\gamma_{(k)i})\;$ in $L[X]$ for $k\in [1,n]$. 

We have
\[
\Res(g_{(1)},\dots,g_{(n)}) \;\; =\;\; 
\prod_{1\le\, k\, <\, \ell\,\le n} \;\;\prod_{(i,j)\,\in\, [1,m_{(k)}]\ti [1,m_{(\ell)}]} (\gamma_{(k)i} \,-\, \gamma_{(\ell)j}) \; .
\]
\end{Corollary}

\subsection{Resultants and discriminants}
\label{ResDisc}

Denote by $\Delta(f)$ the discriminant of a polynomial $f(X)\in R[X]$.

\begin{Corollary}
 \label{disk=diskmalres}
We have
$
 \Delta(g_{(1)} \cdot\, \dots\,\cdot g_{(n)}) \,=\, \big(\prod\limits_{k\in [1,n]}\Delta(g_{(k)})\big) \,\cdot\, 
 \Res(g_{(1)},\, \dots \,, g_{(n)})^{2} \; .
$
\end{Corollary}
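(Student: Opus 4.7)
The plan is to work in the splitting field $L$ of $\prod_k g_{(k)}$ introduced in Section~\ref{SecRes} and combine the root formula for the discriminant of a monic polynomial with Lemma~\ref{res=prod}. Writing $g_{(k)}(X) = \prod_{i \in [1,m_{(k)}]}(X - \gamma_{(k)i})$, the product polynomial $g(X) := \prod_{k \in [1,n]} g_{(k)}(X)$ splits over $L$ as $\prod_{(k,i)}(X - \gamma_{(k)i})$, so its $M$ roots are indexed by the pairs $(k,i)$ with $k \in [1,n]$ and $i \in [1,m_{(k)}]$.

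By the standard root formula for the discriminant,
\[
\Delta(g) \;=\; \prod_{\{(k,i),(k',i')\}} (\gamma_{(k)i} - \gamma_{(k')i'})^{2},
\]
where the product ranges over unordered pairs of distinct index pairs. I would partition this product according to whether $k = k'$ or $k \ne k'$. The contribution from pairs with $k = k'$ (forcing $i \ne i'$) is, at each level $k$, exactly $\prod_{i < j}(\gamma_{(k)i} - \gamma_{(k)j})^2 = \Delta(g_{(k)})$, yielding $\prod_{k \in [1,n]} \Delta(g_{(k)})$ overall.

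For the pairs with $k \ne k'$, each unordered pair has a unique representative with $k < k'$, and all $m_{(k)} \cdot m_{(\ell)}$ combinations $(i,j) \in [1,m_{(k)}]\times[1,m_{(\ell)}]$ occur. Their contribution is therefore
\[
\prod_{1 \le k < \ell \le n}\;\prod_{(i,j) \in [1,m_{(k)}]\times[1,m_{(\ell)}]} (\gamma_{(k)i} - \gamma_{(\ell)j})^{2},
\]
which by Lemma~\ref{res=prod} equals $\Res(g_{(1)},\dots,g_{(n)})^2$. Multiplying the two contributions gives the claim.

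There is no substantive obstacle; the only thing requiring care is the bookkeeping that splits the unordered pairs of distinct roots of $g$ into the two disjoint cases, and the observation that squaring eliminates any sign ambiguity coming from an a~priori choice of ordering within a cross-term pair. Thus the identification with $\Res^2$ is insensitive to the ordering convention used in Lemma~\ref{res=prod}.
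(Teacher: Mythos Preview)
Your proof is correct and follows essentially the same route as the paper: express $\Delta\big(\prod_k g_{(k)}\big)$ via the root formula, split the unordered pairs of roots into same-factor pairs (yielding $\prod_k \Delta(g_{(k)})$) and cross-factor pairs, and identify the latter with $\Res(g_{(1)},\dots,g_{(n)})^2$ via Lemma~\ref{res=prod}.
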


\begin{proof}
Note that $\Delta(g_{(k)}) = \prod\limits_{1 \leq i<j \leq m_{(k)}} (\gamma_{(k)i}-\gamma_{(k)j})^{2}\,$ for $k \in [1,n]$; 
cf.\ \bfcite{vdW60}{\S 33}. So
\[
\hsp{10}
 \begin{array}{cl}
                             & \hsp{-20}\Delta(g_{(1)}\cdot\, \dots \,\cdot g_{(n)}) \\
=                            & (\prod\limits_{k\in [1,n]} \prod\limits_{\;1 \leq i<j \leq m_{(k)}} 
                               (\gamma_{(k)i} - \gamma_{(k)j})^{2})\, \cdot\, 
                               (\prod\limits_{1\le\, k\, <\, \ell\,\le n} \;\;\prod\limits_{(i,j)\,\in\, [1,m_{(k)}]\ti [1,m_{(\ell)}]} (\gamma_{(k)i} \,-\, \gamma_{(\ell)j})^2)  \\
\aufgl{C\,\ref{res=prod}} & (\prod\limits_{k\in [1,n]} \Delta(g_{(k)})) \,\cdot\, 
                               \Res(g_{(1)} \,,\, \dots \,,\, g_{(n)})^{2} \; . \vsp{-5}\\
\end{array}
\]
\end{proof}

\begin{Remark}
 \label{diskfkongrdisksf}
Let $r\in R$. Suppose given monic polynomials $f(X),\, \s{f}(X) \,\in\, R[X]$ such that $f(X) \,\equiv_r\, \s{f}(X)$. Then 
$
 \Delta(f) \,\equiv_r\, \Delta(\s{f}) .
$
\end{Remark}

\begin{Proposition}
\label{bewertung}
Let $f(X)\in R[X]$ be a monic polynomial with $\Delta(f)\ne 0$. Suppose that we have 
$
f(X) \;\equiv_{\pi\Delta(f)}\; \prod\limits_{k\in[1,n]} g_{(k)}(X)\;.
$
Then $\Res(g_{(1)} \,,\, \dots \,,\, g_{(n)})\ne 0$ and \hsp{1.5}
\[
2\, \val_{\pi}(\Res(g_{(1)} \,,\, \dots \,,\, g_{(n)})) \;\leq\; \val_{\pi}(\Delta(f))\; .
\]
\end{Proposition}

\begin{proof}
We have 
$
\Delta(f)\;\overset{\text{R~\ref{diskfkongrdisksf}}}{\equiv}_{\!\pi\Delta(f)}\; \Delta(\prod\limits_{k\in[1,n]} g_{(k)}) \; \aufgl{C~\ref{disk=diskmalres}}\; 
\big(\prod\limits_{k\in[1,n]} \Delta(g_{(k)})\big) \,\cdot\, \Res(g_{(1)} \,,\, \dots \,,\, g_{(n)})^{2}\; .
$
\end{proof}

\begin{Remark}
\label{bewertungRes}
Let $r\in R$. Let $\s{g}_{(1)}(X) \,,\, \dots \,,\, \s{g}_{(n)}(X) \, \in\, R[X]$ be monic polynomials
such that 
$
g_{(k)}(X) \;\equiv_r\; \s{g}_{(k)}(X)
$
for $k\in [1,n]$. Then\hsp{1.5}
$
\Res(g_{(1)} \,,\, \dots \,,\, g_{(n)}) \;\equiv_r\; \Res(\s{g}_{(1)} \,,\, \dots \,,\, \s{g}_{(n)})\; .
$
\end{Remark}

\begin{proof} We may assume that $r$ is not a unit in $R$. Then $\deg g_{(k)} = \deg \s{g}_{(k)}$ for $k\in [1,n]$. Hence
$
\Sylv(g_{(1)},\dots,g_{(n)})\equiv_r \Sylv(\s{g}_{(1)},\dots,\s{g}_{(n)})\; ;
$
cf.\ Definition~\ref{Def1}. Taking determinants, we get
$
\Res(g_{(1)} \,,\, \dots \,,\, g_{(n)}) \equiv_r \Res(\s{g}_{(1)} \,,\, \dots \,,\, \s{g}_{(n)})\; .
$
\end{proof}

\section{Hensel}
\label{SecKH}

Let $R$ be a discrete valuation ring. Let $\pi\in R$ be a generator of the maximal ideal of~$R$. 

\subsection{Linear Algebra tools}
\label{SecLA}

Suppose given $k\ge 1$. Suppose given $A\in R^{k\times k}$ such that $\det(A)\ne 0$.
Let $\pi^{e_1}$, \dots, $\pi^{e_k}$ be the elementary divisors of $A$, ordered such that $0\le e_1\le e_2\le\dots\le e_k$\,. 
Write $e := e_1 +\cdots + e_k = \val_\pi(\det(A))$. Choose $S,\,T \,\in\, \GL_k(R)$ such that $SAT = \text{diag}(\pi^{e_1},\dots,\pi^{e_k}) =: D$.
Suppose given $d_i\in\Z_{\ge 0}$ for $i\in [1,k]$ such that $d_1\ge d_2\ge\dots\ge d_k$. Write $e' := e - (d_2 + \cdots + d_k)$.

\begin{Remark}
\label{RemMinors}
Suppose that for every $i\in [1,k]$, the element $\pi^{d_i}$ divides each entry in column number $i$ of $A$. 
Then $0\le e_k \le e'$.
\end{Remark}

\begin{proof}
Each $(k-1)\ti (k-1)$-minor of $A$ is divisible by $\pi^{d_k + \dots + d_2}$. So $\pi^{d_k + \dots + d_2}$ divides their greatest common divisor $\pi^{e - e_k}$.
\end{proof}

\begin{Lemma}
\label{LinearAlgebra1}
\indent\vsp{-1}
\begin{itemize}
\item[\rm (1)] Suppose given $y\in \pi^{e_k} R^{1\ti k}$. Then there exists $x\in R^{1\ti k}$ such that $xA = y$.
\item[\rm (2)] Suppose given $y\in \pi^e R^{1\ti k}$. Then there exists $x\in R^{1\ti k}$ such that $xA = y$.
\item[\rm (3)] Suppose that for every $i\in [1,k]$, the element $\pi^{d_i}$ divides each entry in column number $i$ of $A$. 
Suppose given $y\in \pi^{e'} R^{1\ti k}$. Then there exists $x\in R^{1\ti k}$ such that $xA = y$.
\end{itemize}
\end{Lemma}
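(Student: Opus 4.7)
The plan is to reduce all three parts to the diagonal case via the Smith normal form decomposition $SAT = D = \operatorname{diag}(\pi^{e_1},\dots,\pi^{e_k})$ with $S,T \in \GL_k(R)$, and then observe that (2) and (3) are immediate consequences of (1) together with the inequalities $e_k \le e$ and (via Remark~\ref{RemMinors}) $e_k \le e'$.

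For part~(1), I would rewrite the equation $xA = y$ using $A = S^{-1}DT^{-1}$ as $xS^{-1}D = yT$. Setting $u := xS^{-1}$ and $v := yT$ (both in $R^{1\times k}$, since $S$ and $T$ lie in $\GL_k(R)$), the problem becomes solving $uD = v$. Since $D$ is diagonal, this is simply $u_j\pi^{e_j} = v_j$ for each $j\in[1,k]$. Now, because $T\in R^{k\times k}$ and $y \in \pi^{e_k}R^{1\times k}$, we have $v = yT \in \pi^{e_k}R^{1\times k}$. Thus every entry $v_j$ is divisible by $\pi^{e_k}$, and since $e_k \ge e_j$, the element $u_j := v_j/\pi^{e_j}$ lies in $R$. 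Taking $x := uS \in R^{1\times k}$ finishes (1).

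Part~(2) follows from (1) because $e = e_1 + \cdots + e_k \ge e_k$ (all $e_i \ge 0$), so $\pi^e R^{1\times k} \subseteq \pi^{e_k} R^{1\times k}$. Part~(3) follows from (1) together with Remark~\ref{RemMinors}, which under the column-divisibility hypothesis gives $e_k \le e'$, hence $\pi^{e'} R^{1\times k} \subseteq \pi^{e_k} R^{1\times k}$, and we apply (1) to the same $y$.

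There is no real obstacle to speak of; the only subtlety is remembering that $S,T \in \GL_k(R)$ (not merely $\GL_k(K)$) so that multiplication by $T$ preserves the condition of being in $\pi^{e_k} R^{1\times k}$, and that multiplication by $S$ at the end keeps $x$ in $R^{1\times k}$. Once this is observed, the argument is purely formal.
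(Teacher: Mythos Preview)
Your proof is correct and follows essentially the same approach as the paper: reduce to the diagonal case via the Smith normal form $SAT = D$, solve componentwise using $e_j \le e_k$, and derive (2) and (3) from (1) via the inequalities $e_k \le e$ and (through Remark~\ref{RemMinors}) $e_k \le e'$. The only difference is cosmetic notation (you name the intermediate vectors $u$ and $v$, the paper writes out $yT = (\pi^{e_k}z_1,\dots,\pi^{e_k}z_k)$ directly).
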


\begin{proof}

Ad~(1). Write
$
yT \;=:\; (\pi^{e_k} z_1 \,,\, \dots \,,\, \pi^{e_k} z_k ) 
$,
where $z_i \in R$ for $i \in [1,k]$. Let
$
x \;:=\;  (\pi^{e_k - e_1} z_1 \,,\, \dots \,,\, \pi^{e_k - e_k} z_k )S \;\in\; R^{1\ti k} \;.
$
So $xA = xS^{-1}DT^{-1} = (\pi^{e_k - e_1} z_1 \,,\, \dots \,,\, \pi^{e_k - e_k} z_k )DT^{-1} = yTT^{-1} = y$.

Ad~(3). By Remark~\ref{RemMinors} we have $e' \geq e_k\,$, so that the assertion follows with (1).
\end{proof}

\begin{Lemma}
\label{LinearAlgebra2}
\indent\vsp{-1}
\begin{itemize}
\item[\rm (1)] Suppose given $u\ge e_k$ and $x\in R^{1\ti k}$ such that $xA\in R^{1\ti k} \pi^u$. Then $x\in R^{1\ti k} \pi^{u - e_k}$.
\item[\rm (2)] Suppose given $u\ge e$ and $x\in R^{1\ti k}$ such that $xA\in R^{1\ti k} \pi^u$. Then $x\in R^{1\ti k} \pi^{u - e}$.
\item[\rm (3)] Suppose that for 
every $i\in [1,k]$, the element $\pi^{d_i}$ divides each entry in column number $i$ of $A$. 
Suppose given $u\ge e'$ and $x\in R^{1\ti k}$ such that $xA\in R^{1\ti k} \pi^u$. Then $x\in R^{1\ti k} \pi^{u - e'}$.
\end{itemize}
\end{Lemma}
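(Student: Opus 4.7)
The plan is to reduce everything to the diagonal case via the Smith normal form $SAT = D = \operatorname{diag}(\pi^{e_1},\dots,\pi^{e_k})$ already fixed in the setup, and then chase divisibilities in each coordinate. The three parts form a nested hierarchy: (1) is the heart of the matter, and (2) and (3) follow by comparing exponents.

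For part (1), I would set $y := xA$, so that by hypothesis $y \in \pi^u R^{1\times k}$. Write $xS^{-1} =: (w_1,\dots,w_k) \in R^{1\times k}$. Then $(w_1,\dots,w_k) D = xS^{-1}(SAT) T^{-1} \cdot T = xA \cdot T \cdot$ — let me rewrite this more cleanly. From $SAT = D$ we get $A = S^{-1} D T^{-1}$, so $xA = xS^{-1} D T^{-1}$, hence $xS^{-1} D = xA \cdot T \in \pi^u R^{1\times k}$ (since $T\in\GL_k(R)$). Componentwise this reads $w_i \pi^{e_i} \in \pi^u R$ for each $i\in[1,k]$, so $w_i \in \pi^{u-e_i} R$. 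Because $e_i\le e_k \le u$, we have $u - e_i \ge u - e_k \ge 0$, whence $w_i \in \pi^{u-e_k} R$ for each $i$. Thus $xS^{-1} \in \pi^{u-e_k} R^{1\times k}$, and multiplying by $S \in \GL_k(R)$ gives $x \in \pi^{u-e_k} R^{1\times k}$, as required.

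Parts (2) and (3) are then essentially bookkeeping. For (2), note $e = e_1 + \cdots + e_k \ge e_k$ (all $e_i \ge 0$), so the hypothesis $u \ge e$ implies $u \ge e_k$; applying (1) gives $x \in \pi^{u-e_k} R^{1\times k} \subseteq \pi^{u-e} R^{1\times k}$, since $u-e \le u-e_k$. For (3), Remark~\ref{RemMinors} gives $e_k \le e'$, so $u \ge e' \ge e_k$, and again (1) yields $x \in \pi^{u-e_k} R^{1\times k} \subseteq \pi^{u-e'} R^{1\times k}$.

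There is really no serious obstacle here: the only non-trivial input is the Smith normal form, which is provided in the setup, together with Remark~\ref{RemMinors} for part (3). The proof essentially mirrors the structure of Lemma~\ref{LinearAlgebra1}, where (1) is the substantive statement obtained directly from the diagonal form and (2), (3) follow by comparing $e_k$ with $e$ and with $e'$ respectively; here we are simply dualising ``solving $xA = y$'' to ``recovering divisibility of $x$ from $xA$''.
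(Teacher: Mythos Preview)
Your proof is correct and follows essentially the same approach as the paper: reduce to the diagonal form via $SAT=D$, read off componentwise divisibilities for $xS^{-1}$, and deduce (2) and (3) from (1) using $e\ge e_k$ and Remark~\ref{RemMinors} respectively. The paper omits the explicit argument for (2), but your treatment of it is exactly in the same spirit.
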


\begin{proof}
Ad~(1). We have $xA \;=\; xS^{-1}DT^{-1} \in R^{1\ti k} \pi^u$, whence $xS^{-1}D \in R^{1\ti k} \pi^u$.   
Denote\linebreak
$
xS^{-1} =: (z_1 \,,\, \dots \,,\, z_k) \in R^{1\ti k}
$.
So 
$
xS^{-1}D = (\pi^{e_1}z_1 \,,\, \dots \,,\, \pi^{e_k}z_k) 
$.
Hence $z_i \in R\pi^{u - e_i} \subseteq R\pi^{u - e_k}$ for $i \in[1,k]$. So
$
xS^{-1} = (z_1 \,,\, \dots \,,\, z_k) \in  R^{1\ti k}\pi^{u - e_k}  
$.
Hence $x \in  R^{1\ti k}\pi^{u - e_k}S  =  R^{1\ti k}\pi^{u - e_k}$.

Ad~(3). By Remark~\ref{RemMinors} we have $e' \geq e_k\,$, so that the assertion follows with (1). 
\end{proof}

\subsection{General case}
\label{SecGen}

Let $f(X)\in R[X]$ be a monic polynomial such that $\Delta(f)\ne 0$. Write $M := \deg f$. 

Let $n\ge 1$. Let $g_{(1)}(X)\,,\, \dots \,,\, g_{(n)}(X) \in R[X]$ be monic polynomials of degree $\ge 1$. Denote $t := \val_{\pi}(\Res(g_{(1)}\,,\,\dots\,,\, g_{(n)}))$.
Write $m_{(k)} := \deg g_{(k)}$ and $M_{(k)} := M - m_{(k)}$ for $k\in [1,n]$.

Let $s\geq \val_\pi(\Delta(f)) + 1$. Suppose that
$
f(X) \equiv_{\pi^{s}} \prod\limits_{k \in [1,n]} g_{(k)}(X)\; .
$

Note that $t$ is finite and that $s\geq 2t + 1$; cf.\ Proposition~\ref{bewertung}. 

(Actually, we could also suppose only the condition $s\geq 2t + 1$. However, $2t+1$ depends on the factors $g_{(1)}(X)\,,\, \dots \,,\, g_{(n)}(X)$, whereas $\val_\pi(\Delta(f)) + 1$ 
depends only on $f(X)$.)

\begin{Lemma}[cf.\ \bfcite{He04}{p.\ 81}]
\label{KochHenselStepPrae}\indent\vsp{-1}
\begin{itemize}
\item[\rm (1)] There exist monic polynomials $\s{g}_{(1)}(X) \,,\, \dots \,,\, \s{g}_{(n)}(X) \,\in\, R[X]$
such that \\
$
\s{g}_{(k)}(X) \;\equiv_{\pi^{s-t}}\; g_{(k)}(X) 
$
for $k \in [1,n]$ and
$
f(X) \;\equiv_{\pi^{2(s - t)}}\; \prod\limits_{k \in [1,n]} \s{g}_{(k)}(X)\; .
$

We call such a tuple $(\s{g}_{(k)}(X))_k$ of polynomials an {\em admissible lift} of $(g_{(k)}(X))_k$ with respect to $s$.
We have
$
\,\val_\pi(\Res(\s{g}_{(1)}\,,\,\dots\,,\, \s{g}_{(n)})) \;=\; t
$
for any admissible lift $(\s{g}_{(k)}(X))_k$ of $(g_{(k)}(X))_k$ with respect to $s$.

\item[\rm (2)] Suppose given $r\in [0,s - 2t]$. 
Suppose given monic polynomials $\s{g}_{(1)}(X) \,,\, \dots \,,\, \s{g}_{(n)}(X)$, $\s{h}_{(1)}(X) \,,\, \dots \,,\, \s{h}_{(n)}(X) \in R[X]$
such that
$
\s{g}_{(k)}(X) \equiv_{\pi^{s-t}} g_{(k)}(X) 
$
and
$
\s{h}_{(k)}(X) \equiv_{\pi^{s-t}} g_{(k)}(X) 
$
for $k \in [1,n]$, and
$
\prod\limits_{k \in [1,n]} \s{g}_{(k)}(X)\equiv_{\pi^{2(s - t) - r}} \prod\limits_{k \in [1,n]} \s{h}_{(k)}(X).
$
Then 
$
\s{g}_{(k)}(X) \,\equiv_{\pi^{2s - 3t - r}}\, \s{h}_{(k)}(X)
$
for $k\in [1,n]$. In particular, considering the case $r = 0$, two admissible lifts with respect to $s$ as in {\rm (1)} are mutually congruent modulo 
$\pi^{2s - 3t}R[X]$.
\end{itemize}
\end{Lemma}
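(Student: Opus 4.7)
The plan is to parametrize any monic lift satisfying $\s{g}_{(k)} \equiv_{\pi^{s-t}} g_{(k)}$ in the form $\s{g}_{(k)} = g_{(k)} + \pi^{s-t} h_{(k)}$ with $h_{(k)} \in R[X]$ of degree $< m_{(k)}$, which automatically preserves monicity and degree $m_{(k)}$. Expanding the product, only the linear-in-$\pi^{s-t}$ term survives modulo $\pi^{2(s-t)}$:
\[
\prod_{k\in [1,n]} \s{g}_{(k)} \;\equiv\; \prod_{k\in [1,n]} g_{(k)} \;+\; \pi^{s-t} \sum_{k\in [1,n]} h_{(k)}\, a_{(k)} \pmod{\pi^{2(s-t)}},
\]
where $a_{(k)} := \prod_{j\in[1,n]\minus\{k\}} g_{(j)}$. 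Writing $f - \prod_k g_{(k)} = \pi^s c(X)$ with $\deg c < M$, the existence problem of (1) reduces to solving $\sum_k h_{(k)} a_{(k)} = \pi^t c$ in $R[X]$. Writing $h_{(k)} = \sum_{i \in [0, m_{(k)}-1]} h_{(k)i} X^i$, the map sending the row $(h_{(k)i})_{k,i} \in R^{1\times M}$ to the coefficient row of $\sum_k h_{(k)} a_{(k)}$ is exactly right multiplication by the matrix $A(g_{(1)},\dots,g_{(n)})$ of Definition~\ref{Def1}.

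For (1), this matrix has determinant $\Res(g_{(1)},\dots,g_{(n)})$, of $\pi$-valuation $t$, and the target row $\pi^t c$ lies in $\pi^t R^{1\times M}$. Lemma~\ref{LinearAlgebra1}(2) therefore produces an exact solution, whence the desired $\s{g}_{(k)}$ with $f \equiv_{\pi^{2(s-t)}} \prod_k \s{g}_{(k)}$ by construction. Since $s - t \geq t + 1 > t$, Remark~\ref{bewertungRes} yields $\Res(\s{g}_{(1)},\dots,\s{g}_{(n)}) \equiv \Res(g_{(1)},\dots,g_{(n)}) \pmod{\pi^{s-t}}$, so both resultants have $\pi$-valuation exactly $t$.

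For (2), I would write $\s{h}_{(k)} - \s{g}_{(k)} = \pi^{s-t} u_{(k)}$ with $u_{(k)} \in R[X]$ of degree $< m_{(k)}$ (both lifts being monic of degree $m_{(k)}$ and congruent to $g_{(k)}$ modulo $\pi^{s-t}$). Expanding $\prod \s{h}_{(k)}$ around $\s{g}_{(k)}$ in the same fashion, modulo $\pi^{2(s-t)}$,
\[
\prod_{k\in[1,n]} \s{h}_{(k)} - \prod_{k\in[1,n]} \s{g}_{(k)} \;\equiv\; \pi^{s-t} \sum_{k\in[1,n]} u_{(k)}\, \s{a}_{(k)} \pmod{\pi^{2(s-t)}},
\]
with $\s{a}_{(k)} := \prod_{j\in[1,n]\minus\{k\}} \s{g}_{(j)}$. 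The hypothesis forces the left side to lie in $\pi^{2(s-t)-r} R[X]$, so after dividing by $\pi^{s-t}$ the row $(u_{(k)i})_{k,i}$ multiplied by $A(\s{g}_{(1)},\dots,\s{g}_{(n)})$ lies in $\pi^{s-t-r} R^{1\times M}$. By (1) and Remark~\ref{bewertungRes}, this matrix still has determinant of $\pi$-valuation $t$, and $r \leq s-2t$ ensures $s - t - r \geq t$. Lemma~\ref{LinearAlgebra2}(2) then yields $(u_{(k)i}) \in \pi^{s-2t-r} R^{1\times M}$, whence $\s{h}_{(k)} - \s{g}_{(k)} = \pi^{s-t} u_{(k)} \in \pi^{2s-3t-r} R[X]$, as required.

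The main bookkeeping obstacle I anticipate is the clean passage between polynomial congruences and exact divisibilities of coefficient rows, so that Lemmas~\ref{LinearAlgebra1} and~\ref{LinearAlgebra2} apply without slack; in particular, one must confirm that each relevant polynomial genuinely lies in $\pi^{\bullet} R[X]$ rather than merely being congruent to such a polynomial modulo a higher power, and that the $t$-valuation of the resultant survives the transition from $A(g_{(k)})$ to $A(\s{g}_{(k)})$, which is precisely what Remark~\ref{bewertungRes} affords given $s \geq 2t + 1$.
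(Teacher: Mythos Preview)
Your proof is correct and follows essentially the same approach as the paper: the same ansatz $\s{g}_{(k)} = g_{(k)} + \pi^{s-t}(\cdot)$, the same first-order expansion of the product, and the same appeals to Lemma~\ref{LinearAlgebra1}(2), Lemma~\ref{LinearAlgebra2}(2), and Remark~\ref{bewertungRes}. The only cosmetic difference is in part~(2), where you expand $\prod_k \s{h}_{(k)}$ around the $\s{g}_{(k)}$ and work with $A(\s{g}_{(1)},\dots,\s{g}_{(n)})$, whereas the paper expands both $\prod_k \s{g}_{(k)}$ and $\prod_k \s{h}_{(k)}$ around the $g_{(k)}$ and works with $A(g_{(1)},\dots,g_{(n)})$; both choices succeed because $s-t\ge t+1$ forces the relevant resultant to retain $\pi$-valuation exactly~$t$.
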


In the following proof, we shall use the notation of \S~\ref{SecRes}. The arguments I have learnt 
from {\sc Koch} \bfcite{Ko97}{Satz 4.4.3, Hilfssatz 4.4.4, Hilfssatz 4.4.5}.

\begin{proof}
{\it Ad~{\rm (1).}} {\it Existence of admissible lift.}

We make the ansatz
$
\s{g}_{(k)}(X) \,=\, g_{(k)}(X)+\pi^{s-t}u_{(k)}(X) 
$
for $k \in [1,n]$ with $u_{(k)}(X) \,\in\, R[X]$ and $\deg u_{(k)} < \deg g_{(k)} = m_{(k)}$ for $k \in [1,n]$. Thus we require that
\[
\begin{array}{rll}
f(X) 
& \overset{!}{\equiv}_{\pi^{2(s - t)}} & \prod\limits_{k \in [1,n]} \s{g}_{(k)}(X)                                                                    
\; =\;                                     \prod\limits_{k \in [1,n]} (g_{(k)}(X)+\pi^{s-t}u_{(k)}(X))                                                \\
& \equiv_{\pi^{2(s - t)}}              & \prod\limits_{k \in [1,n]} g_{(k)}(X) 
                                          + \pi^{s-t}\sum\limits_{k\in [1,n]}u_{(k)}(X)\cdot\prod\limits_{\ell\in [1,n]\minus\{k\}} g_{(\ell)}(X) \; .\\
\end{array}
\]

Let $b(X):= \pi^{t-s}(f(X)-\prod\limits_{k \in [1,n]}g_{(k)}(X))$. Since $f(X) \equiv_{\pi^{s}} \prod\limits_{k \in [1,n]}g_{(k)}(X)$, we get 
$b(X) \equiv_{\pi^{t}} 0$.
So our requirement reads 
$
b(X) \;\overset{!}\equiv_{\pi^{s - t}} \sum\limits_{k \in [1,n]} u_{(k)}(X) \,\cdot\, 
\prod\limits_{\ell \in [1,n]\minus\{k\}} g_{(\ell)}(X)\; .
$
So it suffices to find $u_{(k)}(X) \in R[X]$ for $k \in [1,n]$ as above such that
$
 b(X) \;\overset{!}{=} \sum\limits_{k \in [1,n]} u_{(k)}(X) \,\cdot\, \prod\limits_{\ell \in [1,n]\minus\{k\}} g_{(\ell)}(X) \; .
$

Writing 
$
b(X)                                                  =:  \sum\limits_{i\geq 0} \beta_i X^i\,
$,
$
\prod\limits_{\ell\in[1,n]\minus\{k\}} g_{(\ell)}(X)  =:  \sum\limits_{i \geq 0} a_{(k)i}X^{i}\, 
$,
$
u_{(k)}(X)                                            =:  \sum\limits_{i \geq 0} u_{(k)i}X^{i}\, 
$
for $k\in [1,n]$, where $\beta_{i}\,,\, a_{(k)i}\,,\, u_{(k)i}  \,\in\, R$ for $i\ge 0$, a comparison of coefficients shows that it suffices to 
find 
\[
U \,:=\, (\ubrt{\ru{-4}u_{(1)\,0} \, \dots \, u_{(1)\,m_{(1)}-1}}\; \ubrt{\ru{-4}u_{(2)\,0} \, \dots \, u_{(2)\,m_{(2)}-1}} \;\;\,\dots\,\;\; 
          \ubrt{\ru{-4}u_{(n)\,0} \,\dots\, u_{(n)\,m_{(n)}-1}})\,\in\, R^{1\ti M}
\]
such that
\[
\hsp{-12}
U \cdot
\underbrace{\left(
\begin{array}{ccccccl}
a_{(1)0}  & \cdots & \cdots   & \cdots & a_{(1)M_{(1)}}\hsp{-3} &        &         \\
          & \ddots &          &        &                        & \ddots &         \\
          &        & a_{(1)0} &\cdots  &\cdots                  & \cdots & a_{(1)M_{(1)}} \hsp{4.5}\smash{\raisebox{7mm}{$\left.\ru{9.5}\right\}\text{\scr$m_{(1)}$ rows}$}}\hspace*{-24.5mm} \\
 a_{(2)0} & \cdots & \cdots   & \cdots & a_{(2)M_{(2)}}\hsp{-3} &        &         \\
          & \ddots &          &        &                        & \ddots &         \\
          &        & a_{(2)0} & \cdots & \cdots                 & \cdots & a_{(2)M_{(2)}} \hsp{4.5}\smash{\raisebox{7mm}{$\left.\ru{9.5}\right\}\text{\scr$m_{(2)}$ rows}$}}\hspace*{-23.5mm}\\
 \vdots   &        & \vdots   &        & \vdots                 &        & \vdots  \\
 \vdots   &        & \vdots   &        & \vdots                 &        & \vdots  \\
a_{(n)0}  & \cdots & \cdots   & \cdots & a_{(n)M_{(n)}}\hsp{-3} &        &         \\
          & \ddots &          &        &                        & \ddots &         \\
          &        & a_{(n)0} & \cdots & \cdots                 & \cdots & a_{(n)M_{(n)}} \hsp{4}\smash{\raisebox{7mm}{$\left.\ru{9.5}\right\}\text{\scr$m_{(n)}$ rows}$}}\hspace*{-22mm}\\
\end{array}\right)}_{\ncm =\; \Sylv(g_{(1)} \,,\, \dots \,,\,g_{(n)})}
\hspace{20mm} \aufgl{!} \,\,\, (\beta_0 \,\dots\, \beta_{M-1}).
\]
Note that $(\beta_0\,\dots\,\beta_{M-1}) \in \pi^{t} R^{1\ti M}$ since $b(X) \equiv_{\pi^{t}} 0$. So $U$ exists as
required by Lemma~\ref{LinearAlgebra1}.(2).

{\it Valuation of resultant.} Since $\s{g}_{(k)}(X) \equiv_{\pi^{s - t}} g_{(k)}(X)$ for $k\in [1,n]$, Remark~\ref{bewertungRes} implies that
$
\Res(\s{g}_{(1)}\,,\,\dots\,,\, \s{g}_{(n)})\equiv_{\pi^{s - t}}\Res(g_{(1)}\,,\,\dots\,,\, g_{(n)}).
$
Since $s - t \ge t + 1 = \val_\pi(\Res(g_{(1)}\,,\,\dots\,,\, g_{(n)})) + 1$, this implies
$
\,\val_\pi(\Res(\s{g}_{(1)}\,,\,\dots\,,\, \s{g}_{(n)}))\,=\,\val_\pi(\Res(g_{(1)}\,,\,\dots\,,\, g_{(n)}))\, =\, t\, .
$

{\it Ad~{\rm (2).}} Writing
$
\s{g}_{(k)}(X) =: g_{(k)}(X) + \pi^{s - t} u_{(k)}(X)     
$
and
$
\s{h}_{(k)}(X) =: g_{(k)}(X) + \pi^{s - t} v_{(k)}(X)
$
for $k\in [1,n]$, where $u_{(k)}(X),\,v_{(k)}(X)\,\in\, R[X]$, we obtain $\deg u_{(k)}(X) < \deg g_{(k)}(X) = m_{(k)}\,$, 
since $\s{g}_{(k)}(X)$ and $g_{(k)}(X)$ are monic polynomials of the same degree; likewise, we obtain \mbox{$\deg v_{(k)}(X) < m_{(k)}\,$}.

We have to show that $u_{(k)}(X)\overset{!}{\equiv}_{\pi^{s-2t-r}} v_{(k)}(X)$ for $k \in [1,n]$.
We have
\[
\begin{array}{c}
\prod\limits_{k \in [1,n]} g_{(k)}(X) + \pi^{s-t}\sum\limits_{k\in [1,n]}u_{(k)}(X)\cdot\prod\limits_{\ell\in [1,n]\minus\{k\}} g_{(\ell)}(X) 
\;\equiv_{\pi^{2(s - t)}}\;\prod\limits_{k \in [1,n]} (g_{(k)}(X)+\pi^{s-t}u_{(k)}(X))                                                      \\
\;=\; \prod\limits_{k \in [1,n]} \s{g}_{(k)}(X) \;\equiv_{\pi^{2(s - t) - r}}\; \prod\limits_{k \in [1,n]} \s{h}_{(k)}(X)                     \\
\;=\; \prod\limits_{k \in [1,n]} (g_{(k)}(X)+\pi^{s-t}v_{(k)}(X)) 
\;\equiv_{\pi^{2(s - t)}}\; \prod\limits_{k \in [1,n]} g_{(k)}(X) 
                                + \pi^{s-t}\sum\limits_{k\in [1,n]}v_{(k)}(X)\cdot\prod\limits_{\ell\in [1,n]\minus\{k\}} g_{(\ell)}(X) \; .\\
\end{array}
\]
The difference yields
$
{\textstyle
\sum\limits_{k\in [1,n]} (u_{(k)}(X) - v_{(k)}(X))\cdot\prod\limits_{\ell\in [1,n]\minus\{k\}} g_{(\ell)}(X) \,\equiv_{\pi^{s - t-r}}\, 0\, .
}
$

Writing
$
w_{(k)}(X) \,:=\, u_{(k)}(X) - v_{(k)}(X)
$
for $k\in [1,n]$, this reads\hypertarget{ast.0}%
\[
\sum\limits_{k\in [1,n]} w_{(k)}(X)\cdot\prod\limits_{\ell\in [1,n]\minus\{k\}} g_{(\ell)}(X) \;\equiv_{\pi^{s - t - r}}\; 0 \; .
\leqno (\ast)
\]
Writing 
$
w_{(k)}(X) \;=:\; \sum\limits_{i\geq 0} w_{(k)i}X^{i}
$
for $k\in [1,n]$, and
\[
W \,:=\, (\ubrt{\ru{-4}w_{(1)\,0} \, \dots \, w_{(1)\,m_{(1)}-1}}\; \ubrt{\ru{-4}w_{(2)\,0} \, \dots \, w_{(2)\,m_{(2)}-1}} \;\;\,\dots\,\;\; 
          \ubrt{\ru{-4}w_{(n)\,0} \,\dots\, w_{(n)\,m_{(n)}-1}})\,\in\, R^{1\ti M}\; ,
\]
we have to show that $W\overset{!}{\in}\pi^{s - 2t - r} R^{1\ti M}$. From \hyperlink{ast.0}{$(\ast)$}, we obtain
$
W \cdot \Sylv(g_{(1)} \,,\, \dots \,,\,g_{(n)}) \,\in\, \pi^{s - t - r}R^{1\ti M}\; .
$
Note that $s - t - r\ge t= \val_\pi(\det \Sylv(g_{(1)} \,,\, \dots \,,\,g_{(n)}))$. 
So we can infer by Lemma~\ref{LinearAlgebra2}.(2) that $W\in \pi^{s - 2t - r}R^{1\ti M}$.
\end{proof}

\begin{Remark}
\label{ReferentenMethode}\rm
In \bfcite{FPR02}{App.~B}, the same method is used as in Lemma~\ref{KochHenselStepPrae}.(1). Since in \mbox{\bfcite{FPR02}{App.~B}}, the discriminant of $f(X)$ is not invoked, 
the necessary starting precision depends on the factorisation chosen there; namely, it is assumed, using our notation and context, that for some $\tau\ge 0$, the congruence
$U\cdot \Sylv(g_{(1)} \,,\, \dots \,,\, g_{(n)}) \equiv_{\pi^{\tau+1}} (\pi^\tau,0,\dots,0)$ is solvable for some $U\in R^{1\ti M}$ in order to be able to use a factorisation 
$f(X)\equiv_{\pi^{2\tau +1}} \prod_{k\in [1,n]} g_{(k)}(X)$. This condition is satisfied if $\Res(g_{(1)} \,,\, \dots \,,\, g_{(n)})$ divides $\pi^\tau$, and so, using Proposition~\ref{bewertung},
it is satisfied if $\Delta(f)$ divides $\pi^{2\tau}$.
\end{Remark}

\begin{Theorem}
\label{KochHensel}
Suppose $R$ to be a complete discrete valuation ring. Recall that $f(X)\in R[X]$ is a monic polynomial with $\Delta(f)\ne 0$, that $s \ge \val_\pi(\Delta(f)) + 1$, 
that $f(X) \equiv_{\pi^{s}} \smash{\prod\limits_{k \in [1,n]}} g_{(k)}(X)$ and that \linebreak $t = \val_{\pi}(\Res(g_{(1)}\,,\,\dots\,,\, g_{(n)}))$.

Then there exist unique monic polynomials \mbox{$\ih{g}_{(1)}(X) \,,\, \dots \,,\, \ih{g}_{(n)}(X) \,\in\, R[X]$} such that 
$
\;\ih{g}_{(k)}(X) \,\equiv_{\pi^{s-t}}\, g_{(k)}(X)\;
$
for $k \in [1,n]$ and
$
\;f(X) \,=\, \prod\limits_{k \in [1,n]} \ih{g}_{(k)}(X)\; .
$
\end{Theorem}

\begin{proof}
 
{\it Existence.} Since $R$ is complete, it suffices to show that there exist monic polynomials
$\s{g}_{(1)}(X) \,,\, \dots \,,\, \s{g}_{(n)}(X) \,\in\, R[X]$ such that 
$
f(X) \,\equiv_{\pi^{s+1}}\, \prod\limits_{k \in [1,n]} \s{g}_{(k)}(X)
$
and
$
\s{g}_{(k)}(X) \,\equiv_{\pi^{s-t}}\, g_{(k)}(X)\;
$
for $k \in [1,n]$, and
$
\val_{\pi}(\Res(\s{g}_{(1)}\,,\,\dots\,,\, \s{g}_{(n)})) = t. 
$
This follows by Lemma~\ref{KochHenselStepPrae}.(1) as $2(s - t) \ge s+1$.

{\it Uniqueness.} Given 
$\ih{g}_{(1)}(X) , \dots , \ih{g}_{(n)}(X) ,\ih{h}_{(1)}(X) , \dots , \ih{h}_{(n)}(X) \in R[X]$, all monic, such that
\mbox{$
\ih{g}_{(k)}(X) \equiv_{\pi^{s-t}} g_{(k)}(X) \equiv_{\pi^{s-t}} \ih{h}_{(k)}(X)
$}
for \mbox{$k \in [1,n]$} and
\smash{$
\prod\limits_{k \in [1,n]} \ih{g}_{(k)}(X) = f(X) = \prod\limits_{k \in [1,n]} \ih{h}_{(k)}(X),
$}
we have to show that $\ih{g}_{(k)}(X)\overset{!}{=} \ih{h}_{(k)}(X)$ for $k \in [1,n]$.

Note that $\val_{\pi}(\Res(\ih{g}_{(1)}\,,\,\dots\,,\, \ih{g}_{(n)})) = t = \val_{\pi}(\Res(\ih{h}_{(1)}\,,\,\dots\,,\, \ih{h}_{(n)}))$
by Lemma~\ref{KochHenselStepPrae}.(1).

Let $s_1 := s$. Both $(\ih{h}_{(k)}(X))_k$ and $(\ih{g}_{(k)}(X))_k$ are admissible lifts of $(\ih{g}_{(k)}(X))_k$ with respect to $s_1\,$ 
in the sense of Lemma~\ref{KochHenselStepPrae}.(1), 
since $\ih{h}_{(k)}(X) \equiv_{\pi^{s_1 - t}} \ih{g}_{(k)}(X)$ for $k\in [1,n]$ and the
other required congruences are verified using equalities. So Lemma~\ref{KochHenselStepPrae}.(2) yields
$
\ih{h}_{(k)}(X) \equiv_{\pi^{2(s_1 - t) - t}} \ih{g}_{(k)}(X) 
$
for $k\in [1,n]$.

Let $s_2 := 2(s_1 - t)$. Note that $s_2 = s_1 + (s_1 - 2t) > s_1\,$. Both $(\ih{h}_{(k)}(X))_k$ and $(\ih{g}_{(k)}(X))_k$ are 
admissible lifts of $(\ih{g}_{(k)}(X))_k$ with respect to $s_2\,$, since $\ih{h}_{(k)}(X) \equiv_{\pi^{s_2 - t}} \ih{g}_{(k)}(X)$ for $k\in [1,n]$.
So Lemma~\ref{KochHenselStepPrae}.(2) yields
$
\ih{h}_{(k)}(X) \equiv_{\pi^{2(s_2 - t) - t}} \ih{g}_{(k)}(X) 
$
for $k\in [1,n]$.

Let $s_3 := 2(s_2 - t)$. Note that $s_3 = s_2 + (s_2 - 2t) > s_2 + (s_1 - 2t) > s_2\,$. Continue as above.

This yields a strictly increasing sequence $(s_\ell)_{\ell\,\ge\, 1}$ of integers such that 
$
\ih{h}_{(k)}(X) \,\equiv_{\pi^{s_\ell - t}}\, \ih{g}_{(k)}(X) 
$
for $k\in [1,n]$ and $\ell\ge 1$. Hence
$
\ih{h}_{(k)}(X) \,=\, \ih{g}_{(k)}(X) 
$
for $k\in [1,n]$.
\end{proof}

\begin{Remark}
\label{RemHensel}\rm
The case $n = 2$ of Theorem~\ref{KochHensel}, i.e.\ the case of a factorisation of $f(X)$ into two factors $g_{(1)}(X)$ and $g_{(2)}(X)$ modulo $\pi^s$, 
is due to {\sc Hensel}; cf.\ \bfcite{He04}{p.\ 80, 81}. 

Translated to our notation, he starts right away with $s > t$. He writes in the statement on \mbox{\bfcite{He04}{p.\ 80, l.\ 8}} that 
$\ih{g}_{(1)}(X)$ and $\ih{g}_{(2)}(X)$ are ``N\"aherungswerte'' of $g_{(1)}(X)$ and $g_{(2)}(X)$. In the proof, on \bfcite{He04}{p.\ 81, l.\ 7}, he 
makes this precise and shows that actually $\ih{g}_{(1)}(X)\equiv_{\pi^{s-t}} g_{(1)}(X)$ and $\ih{g}_{(2)}(X)\equiv_{\pi^{s-t}} g_{(2)}(X)$.
\end{Remark}

\begin{Example}
\label{ExComparison}\rm
Suppose that $n = 3$. Write
$
t_0 := \val_\pi(\Res(g_{(2)} \,,\, g_{(3)}))
$,
$
t_1 :=  \val_\pi(\Res(g_{(1)} \,,\, g_{(2)}g_{(3)})) 
$.
Corollary~\ref{res=prod} gives $\Res(g_{(1)} \,,\, g_{(2)} \,,\, g_{(3)}) = \Res(g_{(1)} \,,\, g_{(2)}g_{(3)}) \cdot \Res(g_{(2)} \,,\, g_{(3)})$,
whence $t = t_1 + t_0\,$. In particular, $t_0$ and $t_1$ are finite.

We can apply Lemma~\ref{KochHenselStepPrae}.(1) to  $f(X) \equiv_{\pi^{s}} g_{(1)}(X)\cdot g_{(2)}(X)\cdot g_{(3)}(X)$ 
to obtain monic polynomials $\s{g}_{(1)}(X) \,,\, \s{g}_{(2)}(X)\,,\, \s{g}_{(3)}(X)\,\in\, R[X]$ such 
that\hypertarget{i}{}
\[
\s{g}_{(k)}(X) \;\equiv_{\pi^{s-t}} \;g_{(k)}(X) \hsp{2} \text{for $k \in [1,3]$} \; ,\hsp{3}         
 f(X)          \;\equiv_{\pi^{2(s - t)}} \; \s{g}_{(1)}(X)\cdot\s{g}_{(2)}(X)\cdot\s{g}_{(3)}(X) \, .  
\leqno \text{(i)}
\]
We can also apply Lemma~\ref{KochHenselStepPrae}.(1) first to the factorisation $f(X) \equiv_{\pi^{s}} g_{(1)}(X)\cdot (g_{(2)}(X)\cdot g_{(3)}(X))$ and
then to the resulting factorisation of the second factor into $g_{(2)}(X)\cdot g_{(3)}(X)$ modulo a certain power of $\pi$. 

We have $s \geq 2t+1 \geq 2t_1+1$. So Lemma~\ref{KochHenselStepPrae}.(1) gives monic polynomials $\s{h}_{(1)}(X) \,,\, \s{h}_{(2)}(X) \,\in\,R[X]$ such that
\[
 \s{h}_{(1)}(X) \; \equiv_{\pi^{s-t_1}} \; g_{(1)}(X) \; , \;\;
 \s{h}_{(2)}(X) \; \equiv_{\pi^{s-t_1}} \; g_{(2)}(X)\cdot g_{(3)}(X)  \;,\;\;    
 f(X)           \; \equiv_{\pi^{2(s - t_1)}}  \; \s{h}_{(1)}(X) \cdot \s{h}_{(2)}(X) \;. 
\]
We have $s - t_1\geq 2t+1 - t_1 = t_1 + 2t_0 + 1 \geq 2t_0 + 1$. So  Lemma~\ref{KochHenselStepPrae}.(1) gives
monic polynomials $\s{\s{g}}_{(2)}(X)\,,\,\s{\s{g}}_{(3)}(X)\,\in\,R[X]$ such that 
\[
 \begin{array}{c}
 \s{\s{g}}_{(2)}(X)  \;\equiv_{\pi^{(s - t_1) - t_0}} \; g_{(2)}(X) \; , \hsp{3}
 \s{\s{g}}_{(3)}(X)  \;\equiv_{\pi^{(s - t_1) - t_0}} \; g_{(3)}(X) \; , \hsp{3} \\
 \s{h}_{(2)}(X) \; \equiv_{\pi^{2((s - t_1) - t_0)}}\;  \s{\s{g}}_{(2)}(X) \cdot \s{\s{g}}_{(3)}(X) \; .
\end{array}
\]
Altogether, the two subsequent applications of Lemma~\ref{KochHenselStepPrae}.(1) for two factors yield\hypertarget{ii}{}
\[
\begin{array}{l}
 \s{h}_{(1)}(X)      \;\equiv_{\pi^{s - t_1}}\;  g_{(1)}(X) \; , \;\;
 \s{\s{g}}_{(2)}(X)  \;\equiv_{\pi^{s - t_1 - t_0}} \;  g_{(2)}(X) \; , \;\;
 \s{\s{g}}_{(3)}(X)  \;\equiv_{\pi^{s - t_1 - t_0}} \;  g_{(3)}(X) \\
f(X) \;  \equiv_{\pi^{2(s - t_1)}} \;\s{h}_{(1)}(X) \cdot \s{h}_{(2)}(X)        
     \; \equiv_{\pi^{2(s - t_1 - t_0)}} \; \s{h}_{(1)}(X) \cdot \s{\s{g}}_{(2)}(X) \cdot \s{\s{g}}_{(3)}(X)   \;.  \\
\end{array}
\leqno \text{(ii)} 
\]
Comparing the result (\hyperlink{i}{i}) of Lemma~\ref{KochHenselStepPrae}.(1) for three factors with the result (\hyperlink{ii}{ii}) 
of two subsequent applications of Lemma~\ref{KochHenselStepPrae}.(1) for two factors, both methods essentially yield a precision of $s - t$ for the factors and a 
precision of $2(s - t)$ for the product decomposition.
\end{Example}

\subsection{Case $f(X)\equiv_\pi X^M$}
\label{SecSp}

Let $f(X)\in R[X]$ be a monic polynomial. Write $M := \deg f$. Suppose that $f(X)\equiv_\pi X^M$.

Let \mbox{$n\geq 1$}. Suppose given monic polynomials \mbox{$g_{(1)}(X) , \dots , g_{(n)}(X) \in R[X]$} with degree $\ge 1$.
Write \linebreak
\mbox{$m_{(k)} \,:=\, \deg(g_{(k)})$ and $M_{(k)} := M - m_{(k)}$ for $k\in [1,n]$}. 
Suppose the ordering to be chosen such that
\mbox{$m_{(1)} \le m_{(2)}\le \dots \le m_{(n)}$}
and that \mbox{$\Res(g_{(1)},\dots, g_{(n)})\ne 0$}.
Let
\mbox{$
t  := \val_\pi\!\big(\Res(g_{(1)},\dots, g_{(n)})\big)
$},
$
t' := e' \; :=\; \val_\pi\!\big(\Res(g_{(1)}\,,\,\dots\,,\, g_{(n)})\big) - \sum_{j\in [1,n-1]} \big((n-j)m_{(j)} - 1\big) \; .
$
Let $s\geq t + t' + 1$. Suppose that
$
f(X) \equiv_{\pi^{s}} \prod\limits_{k \in [1,n]} g_{(k)}(X)\; .
$
We remark that $g_{(k)}(X) \equiv_\pi X^{m_{(k)}}$ for $k\in [1,n]$.

(Note that we may replace $s\geq t + t' + 1$ by $s \ge \val_\pi(\Delta(f)) + 1$ if $\Delta(f)\ne 0$; cf.\ Proposition~\ref{bewertung}.)

\begin{Lemma}
\label{LemSp2}
Let $\ell\geq 1$. Let $h_{(1)}(X) \,,\, \dots \,,\, h_{(\ell)}(X) \in R[X]$ be monic polynomials of degree $\ge 1$. 
Write $\chi_{(k)} := \deg(h_{(k)})$ for $k\in [1,\ell]$. Write $\chi := \sum_{k\in [1,\ell]} \chi_{(k)}\,$. Suppose the ordering to be chosen such
that $\chi_{(1)}\le\chi_{(2)}\le\dots\le\chi_{(\ell)}\,$. Suppose that $h_{(k)}(X) \equiv_\pi X^{\chi_{(k)}}$ for $k\in [1,\ell]$.
Write $\prod_{k\in [1,\ell]} h_{(k)}(X) =: \sum_{i\in [0,\chi]} b_i X^i$ with $b_i\in R$ for $i\in [0,\chi]$.

Then 
$
\val_\pi(b_i) \ge \ell - \max\{\,j\in [0,\ell]\, :\, \chi_{(1)} + \cdots + \chi_{(j)} \le i\,\}
$
for $i\in [0,\, \chi]$.
\end{Lemma}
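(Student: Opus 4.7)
The plan is to expand the product directly and read off the $\pi$-valuation of each coefficient from the resulting sum, using the ordering $\chi_{(1)}\le\chi_{(2)}\le\dots\le\chi_{(\ell)}$ at the end via a greedy argument.

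First, since $h_{(k)}(X)\equiv_\pi X^{\chi_{(k)}}$ and $h_{(k)}$ is monic of degree $\chi_{(k)}$, I would write
\[
h_{(k)}(X) \;=\; X^{\chi_{(k)}} \,+\, \pi\, q_{(k)}(X) \qquad \text{with}\qquad q_{(k)}(X)\in R[X],\;\; \deg q_{(k)} \,<\, \chi_{(k)}\,.
\]
Expanding the product gives
\[
\prod_{k\in[1,\ell]} h_{(k)}(X) \;=\; \sum_{S\subseteq [1,\ell]} \pi^{|S|}\,
\Big(\prod_{k\in S} q_{(k)}(X)\Big) \cdot \Big(\prod_{k\in [1,\ell]\minus S} X^{\chi_{(k)}}\Big).
\]
Each summand has $\pi$-valuation at least $|S|$, so in the expansion giving $b_i$ only subsets $S$ that contribute something nontrivial to the coefficient of $X^i$ matter. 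The summand for $S$ contributes to $X^i$ only if $i-\sum_{k\in [1,\ell]\minus S}\chi_{(k)}$ lies in the range $[0,\,\sum_{k\in S}(\chi_{(k)}-1)]$ of possible degrees of $\prod_{k\in S} q_{(k)}$; in particular, a necessary condition is $\sum_{k\in [1,\ell]\minus S}\chi_{(k)}\le i$.

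Therefore $\val_\pi(b_i)\ge \min\{\,|S|\,:\, S\subseteq [1,\ell],\; \sum_{k\in [1,\ell]\minus S}\chi_{(k)}\le i\,\}$. Setting $T := [1,\ell]\minus S$, this minimum equals $\ell - \max\{|T|:T\subseteq [1,\ell],\;\sum_{k\in T}\chi_{(k)}\le i\}$. Here the ordering $\chi_{(1)}\le\cdots\le\chi_{(\ell)}$ enters: the maximal size of such a $T$ is realised by taking $T = [1,j^\ast]$ with $j^\ast := \max\{\,j\in [0,\ell]\,:\,\chi_{(1)}+\cdots+\chi_{(j)}\le i\,\}$, since replacing any element of $T=[1,j^\ast]$ by a larger-indexed one can only increase the sum, and one can never include more than $j^\ast$ indices without exceeding $i$. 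This gives exactly $\val_\pi(b_i)\ge \ell - j^\ast$, as required.

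The only step that needs any care is the last one, namely the greedy comparison showing that the smallest-indexed initial segment $[1,j^\ast]$ realises the maximum $|T|$; this is a one-line exchange argument using $\chi_{(1)}\le\cdots\le\chi_{(\ell)}$. Everything else is bookkeeping in the expansion.
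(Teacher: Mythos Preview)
Your proof is correct and follows essentially the same approach as the paper. The paper expands coefficientwise over multi-indices $(i_{(1)},\dots,i_{(\ell)})$ with $\sum_k i_{(k)}=i$ and bounds each summand's valuation by $\ell-|H|$ where $H=\{k:i_{(k)}=\chi_{(k)}\}$, whereas you group terms by the subset $S$ of factors contributing $\pi q_{(k)}$; your complement $T=[1,\ell]\minus S$ plays exactly the role of the paper's $H$, and the concluding greedy/exchange argument using $\chi_{(1)}\le\cdots\le\chi_{(\ell)}$ is the same in both.
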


\begin{proof}
Write $h_{(k)}(X) =: \sum_{i\in [0,\chi_{(k)}]} h_{(k)i} X^i$ for $k\in [1,\ell]$, where $h_{(k)i}\in R$ for $i\in [0,\chi_{(k)}]$. We have
$
\; b_i \; =\; 
\sum_{i_{(k)}\in [0,\chi_{(k)}]\;\text{for $k\in [1,\ell]$,}\; i_{(1)} + \cdots + i_{(\ell)} \,=\, i}\;\;
\prod_{k\in [1,\ell]} h_{(k)i_{(k)}}\; .
$
So it suffices to show that 
$
\val_\pi\big(\prod_{k\in [1,\ell]} h_{(k)i_{(k)}}\big)\;\overset{!}{\ge}\; \ell - \max\{\,j\in [0,\ell]\, :\, \chi_{(1)} + \cdots + \chi_{(j)} \le i\,\}
$
for all occurring summands. Since $\val_\pi(h_{(k)i_{(k)}}) \ge 1$ if $i_{(k)} \in [0,\chi_{(k)}-1]$, it remains to show that for such a summand,
we have
$
|\{\,k\in [1,\ell]\, : \,i_{(k)} = \chi_{(k)} \,\}| \;\overset{!}{\le}\; \max\{\,j\in [0,\ell] \, :\, \chi_{(1)} + \cdots + \chi_{(j)} \le i\,\}\; .
$

{\it Assume} that 
$
|\{\,k\in [1,\ell]\, : \,i_{(k)} = \chi_{(k)} \,\}| \; > \; \max\{\,j\in [0,\ell]\, :\, \chi_{(1)} + \cdots + \chi_{(j)} \le i\,\}\; .
$
Write $H := \{\,k\in [1,\ell]\, : \,i_{(k)} = \chi_{(k)} \,\}\subseteq [1,\ell]$. 
Then $\ell\ge |H| > \max\{\,j\in [0,\ell]\, :\, \chi_{(1)} + \cdots + \chi_{(j)} \le i\,\}$, whence $\chi_{(1)} + \cdots + \chi_{(|H|)} > i$. So, 
using $\chi_{(1)}\le\chi_{(2)}\le\dots\le\chi_{(\ell)}$, we get
$
i 
 =    i_{(1)} + \cdots + i_{(\ell)}                                          
 =    (\sum_{k\in H} i_{(k)}) + (\sum_{k\in [1,\ell]\minus H} i_{(k)})       
 \ge  \sum_{k\in H} i_{(k)}                                                  
 =    \sum_{k\in H} \chi_{(k)}                                               
 \ge  \sum_{k\in [1,|H|]} \chi_{(k)} 
 >    i                                                         
$. {\it Contradiction.}
\end{proof}

\begin{Lemma}
\label{LemSp3}\indent\vsp{-1} 
\begin{itemize}
\item[\rm (1)] We have $e'\ge 0$.
\item[\rm (2)] Given $y\in \pi^{e'} R^{1\ti M}$, there exists $x\in R^{1\ti M}$ such that $x\Sylv(g_{(1)}\,,\,\dots\,,\, g_{(n)}) = y$.
\item[\rm (3)] Suppose given $u\ge e'$ and $x\in R^{1\ti M}$ such that $x\Sylv(g_{(1)}\,,\,\dots\,,\, g_{(n)})\in R^{1\ti M} \pi^u$. 

Then $x\in R^{1\ti M} \pi^{u - e'}$.
\end{itemize}
\end{Lemma}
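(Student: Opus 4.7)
The plan is to reduce all three assertions to Remark~\ref{RemMinors}, Lemma~\ref{LinearAlgebra1}.(3) and Lemma~\ref{LinearAlgebra2}.(3), applied to the matrix $A := A(g_{(1)},\dots,g_{(n)})$ with $k := M$ and $e := \val_\pi(\det A) = t$. For this I need to exhibit a non-increasing sequence $d_1 \ge d_2 \ge \dots \ge d_M \ge 0$ such that $\pi^{d_c}$ divides every entry of column $c$ of $A$ and such that $\sum_{c=2}^M d_c = \sum_{j\in[1,n-1]}\big((n-j)m_{(j)} - 1\big)$. Once such a sequence is produced, assertion (1) follows from $0 \le e_M \le e'$ via Remark~\ref{RemMinors}, (2) from Lemma~\ref{LinearAlgebra1}.(3), and (3) from Lemma~\ref{LinearAlgebra2}.(3).

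Recall that the entry of $A$ in row $s = m^{(k-1)} + r \in I_{(k)}$ and column $c$ is $a_{(k),\,c-r}$ when $c - r \in [0, M_{(k)}]$ and is zero otherwise. Since $f \equiv_\pi X^M$ forces $g_{(\ell)} \equiv_\pi X^{m_{(\ell)}}$ for every $\ell$, I can apply Lemma~\ref{LemSp2} to the product $\prod_{\ell \ne k} g_{(\ell)}(X)$, whose coefficients are the $a_{(k),i}$. Using $m_{(1)} \le \dots \le m_{(n)}$, the sorted degrees of the factors of this product are $m_{(1)},\dots,m_{(k-1)},m_{(k+1)},\dots,m_{(n)}$, and their $j$-th partial sum, which I denote $\sigma_j^{(k)}$, equals $m^{(j)}$ for $j \le k-1$ and $m^{(j+1)} - m_{(k)}$ for $j \ge k$. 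A direct case analysis using $m_{(j+1)} \ge m_{(k)}$ for $j + 1 > k$ shows $\sigma_j^{(k)} \ge m^{(j)}$ for every $j \in [0,n-1]$ and every $k$, with equality throughout when $k = n$.

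I define
\[
d_c \;:=\; (n-1) - \max\{\, j \in [0,n-1] \,:\, m^{(j)} \le c-1\,\} \quad \text{for } c \in [1,M].
\]
This sequence is non-increasing, lies in $[0,n-1]$, satisfies $d_1 = n-1$, and vanishes for $c > m^{(n-1)} = M_{(n)}$. For any nonzero entry $a_{(k),c-r}$ in column $c$, Lemma~\ref{LemSp2} combined with $\sigma_j^{(k)} \ge m^{(j)}$ and $c-r \le c-1$ yields
\[
\val_\pi(a_{(k),c-r}) \,\ge\, (n-1) - \max\{j : \sigma_j^{(k)} \le c-r\} \,\ge\, (n-1) - \max\{j : m^{(j)} \le c-1\} \,=\, d_c,
\]
so $\pi^{d_c}$ divides column $c$ of $A$.

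To match the target sum, I partition $[1,M]$ by the value of the maximum appearing in the definition of $d_c$: for $j \in [0,n-1]$, its preimage is the interval $[m^{(j)}+1, m^{(j+1)}]$ of length $m_{(j+1)}$, on which $d_c = n-1-j$. A straightforward summation and reindexing gives $\sum_{c=1}^M d_c = \sum_{k=1}^{n-1}(n-k)m_{(k)}$, so subtracting $d_1 = n-1$ yields exactly $\sum_{j=1}^{n-1}\big((n-j)m_{(j)} - 1\big)$. The main obstacle in executing this plan is the book-keeping: verifying $\sigma_j^{(k)} \ge m^{(j)}$ (immediate from the sorting but essential) and using it uniformly across row blocks, so that the valuation bound from the most favourable choice $k = n$ is already sufficient for every row of every block.
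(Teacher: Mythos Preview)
Your proof is correct and follows essentially the same approach as the paper: define the column divisibility exponents $d_c$ via the partial sums $m^{(j)}$ of the full sorted degree sequence, use Lemma~\ref{LemSp2} together with the observation that the partial sums for the product omitting $g_{(k)}$ dominate the $m^{(j)}$, compute $\sum_{c\ge 2} d_c$, and reduce to Remark~\ref{RemMinors} and Lemmas~\ref{LinearAlgebra1}.(3), \ref{LinearAlgebra2}.(3). The only cosmetic difference is that the paper first establishes $\val_\pi(a_{(k),i-1}) \ge d_i$ and then invokes the monotonicity $d_\xi \ge d_i$ for $\xi \le i$ to cover all entries of column~$i$, whereas you fold the shift $c-r \le c-1$ and the inequality $\sigma_j^{(k)} \ge m^{(j)}$ into a single chain.
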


\begin{proof}
Write $\prod\limits_{j\in[1,n]\minus\{k\}} g_{(j)}(X) \;=:\; \sum\limits_{i\in[0,M_{(k)}]} a_{(k)i}X^{i}$ for $k\in [1,n]$.

Suppose given $i\in [1,M]$. Write
$
d_i \; := \; (n - 1) - \max\{\,j\in [0,n-1]\, :\, m_{(1)} + \cdots + m_{(j)} \le i - 1\,\}\; .
$
Note that $d_\xi \ge d_\eta$ for $1\le\xi\le\eta\le M$. By Lemma~\ref{LemSp2}, we have
$
\val_\pi(a_{(k)i-1})\;\ge\; d_i
$
for $k\in [1,n]$, since the sequence of degrees of the polynomials $g_{(j)}(X)$, with $g_{(k)}(X)$ omitted, is entrywise bounded below by 
the sequence of degrees of the polynomials $g_{(j)}(X)$, i.e.\ by the sequence of the~$m_{(j)}\,$. 
It follows that
$
\val_\pi(a_{(k)\xi-1})\;\ge\; d_\xi\;\ge\; d_i
$
for $k\in [1,n]$ and $\xi\in [1,i]$. Hence $\pi^{d_i}$ divides column number 
$i$ of $\Sylv(g_{(1)}\,,\,\dots\,,\, g_{(n)})\,$; cf.\ Definition~\ref{Def1}.

We have
\[
\begin{array}{rl}
  & \hsp{-10} d_2 + \cdots + d_M \;=\; \sum_{i\in [2,M]} \left((n - 1) - \max\{\,j\in [0,n-1]\, :\, m_{(1)} + \cdots + m_{(j)} \le i - 1\,\}\right) \\
= & (M - 1)(n - 1) - \sum_{i\in [1,M-1]}\max\{\,j\in [0,n-1]\, :\, m_{(1)} + \cdots + m_{(j)} \le i\,\}                                  \\
= & (M - 1)(n - 1) - \sum_{j\in [0,n-1]}\, j\cdot |\,[\,m_{(1)} + \cdots + m_{(j)}\,,\, m_{(1)} + \cdots + m_{(j)} + m_{(j+1)} - 1\,]\,| \\
= & (M - 1)(n - 1) - \sum_{j\in [0,n-1]}\, j m_{(j+1)} \; =\; (M - 1)(n - 1) - \sum_{j\in [1,n]}\, (j-1) m_{(j)}                         \\
= & (M - 1)(n - 1) + M - \sum_{j\in [1,n]}\, j m_{(j)} \;=\; 1 + nM - n - \sum_{j\in [1,n]}\, j m_{(j)}                                  \\
= & 1 + \sum_{j\in [1,n]} \big((n-j)m_{(j)} - 1\big) \;=\; \sum_{j\in [1,n-1]} \big((n-j)m_{(j)} - 1\big)\; ,                            \\
\end{array}
\]
whence
\mbox{$
\val_\pi(\det \Sylv(g_{(1)}\,,\,\dots\,,\, g_{(n)})) - (d_2 + \cdots + d_M) \; =\; e'.
$}
So assertion~(2) follows by Lemma~\ref{LinearAlgebra1}.(3), assertion~(3) follows by Lemma~\ref{LinearAlgebra2}.(3); moreover, assertion~(1) follows by Remark~\ref{RemMinors}.
\end{proof}

\begin{Lemma}
\label{KochHenselStepPraeSp}
\indent\vsp{-1}
\begin{itemize}
\item[\rm (1)] There exist monic polynomials $\s{g}_{(1)}(X) , \dots , \s{g}_{(n)}(X) \in R[X]$
such that\\
$
\s{g}_{(k)}(X) \equiv_{\pi^{s-t'}} g_{(k)}(X) 
$
for $k \in [1,n]$ and
$
f(X) \,\equiv_{\pi^{2(s - t')}}\, \prod\limits_{k \in [1,n]} \s{g}_{(k)}(X)\; .
$

We call such a tuple $(\s{g}_{(k)}(X))_k$ of polynomials an {\em admissible$'$ lift} of $(g_{(k)}(X))_k$ with respect to $s$.
We have
$
\val_\pi(\Res(\s{g}_{(1)}\,,\,\dots\,,\, \s{g}_{(n)})) \,=\, t
$
for any admissible$'$ lift $(\s{g}_{(k)}(X))_k$ of $(g_{(k)}(X))_k$ with respect to $s$. 

\item[\rm (2)] Suppose given $r\in [0,s - 2t']$. 
Suppose given monic polynomials $\s{g}_{(1)}(X) \,,\, \dots \,,\, \s{g}_{(n)}(X),$ $\s{h}_{(1)}(X), \dots , \s{h}_{(n)}(X) \in R[X]$
such that
$
\s{g}_{(k)}(X) \equiv_{\pi^{s-t'}} g_{(k)}(X) 
$
and
$
\s{h}_{(k)}(X) \equiv_{\pi^{s-t'}} g_{(k)}(X)
$
for $k \in [1,n]$ and
\smash{$
\prod\limits_{k \in [1,n]} \s{g}_{(k)}(X)\equiv_{\pi^{2(s - t') - r}} \prod\limits_{k \in [1,n]} \s{h}_{(k)}(X).
$}\ru{4.5}
Then 
$
\s{g}_{(k)}(X) \equiv_{\pi^{2s - 3t' - r}} \s{h}_{(k)}(X)
$
for $k\in [1,n]$.

In particular, considering the case $r = 0$, two admissible$'$ lifts with respect to $s$ as in {\rm (1)} are mutually congruent modulo 
$\pi^{2s - 3t'}R[X]$.
\end{itemize}
\end{Lemma}

In the following proof, we shall use the notation of \S~\ref{SecRes}.

\begin{proof}
{\it Ad~{\rm (1).}} {\it Existence of admissible$'$ lift.} We make the ansatz
$
\s{g}_{(k)}(X) \,=\, g_{(k)}(X)+\pi^{s-t'}u_{(k)}(X) 
$
for $k \in [1,n]$ with $u_{(k)}(X) \,\in\, R[X]$ and $\deg u_{(k)} < \deg g_{(k)} = m_{(k)}$ for $k \in [1,n]$. Thus we require that
\[
\begin{array}{rll}
f(X) 
& \overset{!}{\equiv}_{\pi^{2(s - t')}} & \prod\limits_{k \in [1,n]} \s{g}_{(k)}(X)                                                                    
\; =\;                                      \prod\limits_{k \in [1,n]} (g_{(k)}(X)+\pi^{s-t'}u_{(k)}(X))                                               \\
& \equiv_{\pi^{2(s - t')}}              & \prod\limits_{k \in [1,n]} g_{(k)}(X) 
                                          + \pi^{s-t'}\sum\limits_{k\in [1,n]}u_{(k)}(X)\cdot\prod\limits_{\ell\in [1,n]\minus\{k\}} g_{(\ell)}(X) \; .\\
\end{array}
\]

Let $b(X):= \pi^{t'-s}(f(X)-\prod\limits_{k \in [1,n]}g_{(k)}(X))$. Since $f(X) \equiv_{\pi^{s}} \prod\limits_{k \in [1,n]}g_{(k)}(X)$, we get 
$b(X) \equiv_{\pi^{t'}} 0$. So our requirement reads
$
b(X) \,\overset{!}\equiv_{\pi^{s - t'}} \sum\limits_{k \in [1,n]} u_{(k)}(X) \,\cdot\, 
\prod\limits_{\ell \in [1,n]\minus\{k\}} g_{(\ell)}(X)\, .
$
So it suffices to find $u_{(k)}(X) \in R[X]$ for $k \in [1,n]$ as above such that
$
 b(X) \,\overset{!}{=} \sum\limits_{k \in [1,n]} u_{(k)}(X) \,\cdot\, \prod\limits_{\ell \in [1,n]\minus\{k\}} g_{(\ell)}(X) \, .
$

Writing 
$b(X)                                                  =:   \sum\limits_{i\geq 0} \beta_i X^i$, 
$\prod\limits_{\ell\in[1,n]\minus\{k\}} g_{(\ell)}(X)  =:   \sum\limits_{i \geq 0} a_{(k)i}X^{i}$,
$u_{(k)}(X)                                            =:   \sum\limits_{i \geq 0} u_{(k)i}X^{i}$
for $k\in [1,n]$, where $\beta_{i}\,,\, a_{(k)i}\,,\, u_{(k)i}  \,\in\, R$ for $i\ge 0$, a comparison of coefficients shows that it suffices to 
find 
\[
U \,:=\, (\ubrt{\ru{-4}u_{(1)\,0} \, \dots \, u_{(1)\,m_{(1)}-1}}\; \ubrt{\ru{-4}u_{(2)\,0} \, \dots \, u_{(2)\,m_{(2)}-1}} \;\;\,\dots\,\;\; 
          \ubrt{\ru{-4}u_{(n)\,0} \,\dots\, u_{(n)\,m_{(n)}-1}})\,\in\, R^{1\ti M}
\]
such that
$
U \cdot \Sylv(g_{(1)} \,,\, \dots \,,\,g_{(n)}) \;\aufgl{!} \; (\beta_0 \,\dots\, \beta_{M-1}).
$
Note that $(\beta_0\,\dots\,\beta_{M-1}) \in \pi^{t'} R^{1\ti M}$ since $b(X) \equiv_{\pi^{t'}} 0$. So $U$ exists as
required by Lemma~\ref{LemSp3}.(2).

{\it Valuation of resultant.} Since $\s{g}_{(k)}(X) \equiv_{\pi^{s - t'}} g_{(k)}(X)$ for $k\in [1,n]$, Remark~\ref{bewertungRes} implies that
$
\Res(\s{g}_{(1)},\dots, \s{g}_{(n)})\,\equiv_{\pi^{s - t'}}\,\Res(g_{(1)},\dots, g_{(n)})\, .
$
Since $s - t' \ge t + 1 = \val_\pi(\Res(g_{(1)}\,,\,\dots\,,\, g_{(n)})) + 1$, this implies
$
\val_\pi(\Res(\s{g}_{(1)}\,,\,\dots\,,\, \s{g}_{(n)}))\,=\,\val_\pi(\Res(g_{(1)}\,,\,\dots\,,\, g_{(n)}))\;=\; t\, .
$

{\it Ad~{\rm (2).}} Writing
$\s{g}_{(k)}(X) =: g_{(k)}(X) + \pi^{s - t'} u_{(k)}(X)$
and
$\s{h}_{(k)}(X) =: g_{(k)}(X) + \pi^{s - t'} v_{(k)}(X)$
for $k\in [1,n]$, where $u_{(k)}(X),\,v_{(k)}(X)\,\in\, R[X]$, we obtain $\deg u_{(k)}(X) < \deg g_{(k)}(X) = m_{(k)}\,$, 
since $\s{g}_{(k)}(X)$ and $g_{(k)}(X)$ are monic polynomials of the same degree; likewise, we obtain \mbox{$\deg v_{(k)}(X) < m_{(k)}\,$}.

We have to show that $u_{(k)}(X)\overset{!}{\equiv}_{\pi^{s - 2t' - r}} v_{(k)}(X)$ for $k \in [1,n]$. We have
\[
\begin{array}{c}
\prod\limits_{k \in [1,n]} g_{(k)}(X) + \pi^{s-t'}\sum\limits_{k\in [1,n]}u_{(k)}(X)\cdot\prod\limits_{\ell\in [1,n]\minus\{k\}} g_{(\ell)}(X)    
\;\equiv_{\pi^{2(s - t')}}\; \prod\limits_{k \in [1,n]} (g_{(k)}(X)+\pi^{s-t'}u_{(k)}(X))                                                   \\                
\; = \; \prod\limits_{k \in [1,n]} \s{g}_{(k)}(X) \;\equiv_{\pi^{2(s - t') - r}}\;\prod\limits_{k \in [1,n]} \s{h}_{(k)}(X)                   \\
\; = \; \prod\limits_{k \in [1,n]} (g_{(k)}(X)+\pi^{s-t'}v_{(k)}(X)) 
\;\equiv_{\pi^{2(s - t')}}\; \prod\limits_{k \in [1,n]} g_{(k)}(X) 
                                 + \pi^{s-t'}\sum\limits_{k\in [1,n]}v_{(k)}(X)\cdot\prod\limits_{\ell\in [1,n]\minus\{k\}} g_{(\ell)}(X) \; .\\
\end{array}
\]
The difference yields
$
\sum\limits_{k\in [1,n]} (u_{(k)}(X) - v_{(k)}(X))\cdot\prod\limits_{\ell\in [1,n]\minus\{k\}} g_{(\ell)}(X) \,\equiv_{\pi^{s - t'-r}}\, 0\, .
$

Writing
$
w_{(k)}(X) \;:=\; u_{(k)}(X) - v_{(k)}(X)
$
for $k\in [1,n]$, this reads\hypertarget{ast.0}%
\[
\sum\limits_{k\in [1,n]} w_{(k)}(X)\cdot\prod\limits_{\ell\in [1,n]\minus\{k\}} g_{(\ell)}(X) \;\equiv_{\pi^{s - t' - r}}\; 0 \; .
\leqno (\ast)
\]
Writing 
$
w_{(k)}(X) \,=:\, \sum\limits_{i\geq 0} w_{(k)i}X^{i}
$
for $k\in [1,n]$, and
\[
W \,:=\, (\ubrt{\ru{-4}w_{(1)\,0} \, \dots \, w_{(1)\,m_{(1)}-1}}\; \ubrt{\ru{-4}w_{(2)\,0} \, \dots \, w_{(2)\,m_{(2)}-1}} \;\;\,\dots\,\;\; 
          \ubrt{\ru{-4}w_{(n)\,0} \,\dots\, w_{(n)\,m_{(n)}-1}})\,\in\, R^{1\ti M}\; ,
\]
we have to show that $W\overset{!}{\in}\pi^{s - 2t' - r} R^{1\ti M}$. From \hyperlink{ast.0}{$(\ast)$}, we obtain
$
W \cdot \Sylv(g_{(1)} \,,\, \dots \,,\,g_{(n)}) \,\in\, \pi^{s - t' - r}R^{1\ti M}\, .
$
Note that $s - t' - r\ge t'= e'$. 
So we can infer by Lemma~\ref{LemSp3}.(3) that $W\in \pi^{(s - t' - r) - t'}R^{1\ti M} = \pi^{s - 2t' - r}R^{1\ti M}$.
\end{proof}

\begin{Theorem}
\label{KochHenselSp}
Suppose $R$ to be a complete discrete valuation ring. Recall that $f(X)\in R[X]$ is a monic polynomial, that $M = \deg f$, that $f(X)\equiv_\pi X^M$, that 
$g_{(1)}(X) , \dots , g_{(n)}(X) \in R[X]$ are monic polynomials, that $t = \val_\pi\!\big(\Res(g_{(1)},\dots, g_{(n)})\big)$, that $m_{(k)} = \deg(g_{(k)})$ for $k\in [1,n]$, 
ordered such that $m_{(1)} \le \dots \le m_{(n)}\,$,
that $t' = \val_\pi\!\big(\Res(g_{(1)}\,,\,\dots\,,\, g_{(n)})\big) - \sum_{j\in [1,n-1]} \big((n-j)m_{(j)} - 1\big)$, that $s\geq t + t' + 1$ and that 
$f(X) \equiv_{\pi^{s}} \prod\limits_{k \in [1,n]} g_{(k)}(X)$.

Then there exist unique monic polynomials $\ih{g}_{(1)}(X) \,,\, \dots \,,\, \ih{g}_{(n)}(X) \,\in\, R[X]$ such that 
$
\ih{g}_{(k)}(X) \,\equiv_{\pi^{s-t'}}\, g_{(k)}(X)
$
for $k \in [1,n]$ and
$
f(X) \;=\; \prod\limits_{k \in [1,n]} \ih{g}_{(k)}(X) \; .
$
\end{Theorem}

\begin{proof}
{\it Existence.} Since $R$ is complete, it suffices to show that there exist monic polynomials
$\s{g}_{(1)}(X) , \dots , \s{g}_{(n)}(X) \in R[X]$ such that 
$
f(X) \equiv_{\pi^{s+1}} \prod\limits_{k \in [1,n]} \s{g}_{(k)}(X)
$
and
$
\s{g}_{(k)}(X) \equiv_{\pi^{s-t'}} g_{(k)}(X)
$
for $k \in [1,n]$, and
$
\val_{\pi}(\Res(\s{g}_{(1)},\dots, \s{g}_{(n)})) = t. 
$
This follows from Lemma~\ref{KochHenselStepPraeSp}.(1) since $2(s - t') \ge s+1$.

{\it Uniqueness.} Given $\ih{g}_{(1)}(X) , \dots , \ih{g}_{(n)}(X)\,,\,\ih{h}_{(1)}(X), \dots , \ih{h}_{(n)}(X) \,\in\, R[X]$, all monic, such that
$
\ih{g}_{(k)}(X) \equiv_{\pi^{s-t'}} g_{(k)}(X)\equiv_{\pi^{s-t'}}\ih{h}_{(k)}(X)
$
for $k \in [1,n]$ and
\smash{$
\prod\limits_{k \in [1,n]} \ih{g}_{(k)}(X) = f(X) = \prod\limits_{k \in [1,n]} \ih{h}_{(k)}(X),
$}
we have to show that $\ih{g}_{(k)}(X)\overset{!}{=} \ih{h}_{(k)}(X)$ for $k \in [1,n]$.

Note that $\val_{\pi}(\Res(\ih{g}_{(1)}\,,\,\dots\,,\, \ih{g}_{(n)})) = t = \val_{\pi}(\Res(\ih{h}_{(1)}\,,\,\dots\,,\, \ih{h}_{(n)}))$
by Lemma~\ref{KochHenselStepPraeSp}.(1).

Let $s_1 := s$. Both $(\ih{h}_{(k)}(X))_k$ and $(\ih{g}_{(k)}(X))_k$ are admissible$'$ lifts of $(\ih{g}_{(k)}(X))_k$ with respect to $s_1$ in
the sense of Lemma~\ref{KochHenselStepPraeSp}.(1), since $\ih{h}_{(k)}(X) \equiv_{\pi^{s_1 - t'}} \ih{g}_{(k)}(X)$ for $k\in [1,n]$ and the 
other required congruences are verified using equalities.
So Lemma~\ref{KochHenselStepPraeSp}.(2) yields
$
\ih{h}_{(k)}(X) \,\equiv_{\pi^{2(s_1 - t') - t'}}\, \ih{g}_{(k)}(X) 
$
for $k\in [1,n]$.

Let $s_2 := 2(s_1 - t')$. Note that $s_2 = s_1 + (s_1 - 2t') > s_1\,$. Both $(\ih{h}_{(k)}(X))_k$ and $(\ih{g}_{(k)}(X))_k$ are 
admissible$'$ lifts of $(\ih{g}_{(k)}(X))_k$ with respect to $s_2$, since $\ih{h}_{(k)}(X)\equiv_{\pi^{s_2 - t'}}\ih{g}_{(k)}(X)$ for $k\in [1,n]$.
So Lemma~\ref{KochHenselStepPraeSp}.(2) yields
$
\ih{h}_{(k)}(X) \,\equiv_{\pi^{2(s_2 - t') - t'}}\, \ih{g}_{(k)}(X)
$
for $k\in [1,n]$.

Let $s_3 := 2(s_2 - t')$. Note that $s_3 = s_2 + (s_2 - 2t') > s_2 + (s_1 - 2t') > s_2\,$. Continue as above.

This yields a strictly increasing sequence $(s_\ell)_{\ell\,\ge\, 1}$ of integers such that 
$
\ih{h}_{(k)}(X) \,\equiv_{\pi^{s_\ell - t'}}\, \ih{g}_{(k)}(X) 
$
for $k\in [1,n]$ and $\ell\ge 1$. Hence
$
\ih{h}_{(k)}(X) \,=\, \ih{g}_{(k)}(X) 
$
for $k\in [1,n]$.
\end{proof}

\begin{Example}
\label{ExComparisonSp}\rm
Suppose that $n = 3$ and $s\ge 2t+1$. Write
$
t_0 := \val_\pi(\Res(g_{(2)} \,,\, g_{(3)}))
$
and
$
t_1 :=  \val_\pi(\Res(g_{(1)} \,,\, g_{(2)}g_{(3)})) 
$.
Lemma~\ref{res=prod} gives $\Res(g_{(1)} \,,\, g_{(2)} \,,\, g_{(3)}) = \Res(g_{(1)} \,,\, g_{(2)}g_{(3)}) \cdot \Res(g_{(2)} \,,\, g_{(3)})$,
whence $t = t_1 + t_0\,$. In particular, $t_0$ and $t_1$ are finite.

Denote $t' := t - 2m_{(1)} - m_{(2)} + 2$, $t'_0 := t_0 - m_{(2)} + 1$ and $t'_1 := t_1 - m_{(1)} + 1$. So $s\ge 2t+1 \ge t + t' + 1$
and $t' = t'_1 + t'_0 - m_{(1)}\,$.

We can apply Lemma~\ref{KochHenselStepPraeSp}.(1) to  $f(X) \equiv_{\pi^{s}} g_{(1)}(X)\cdot g_{(2)}(X)\cdot g_{(3)}(X)$ 
to obtain monic polynomials $\s{g}_{(1)}(X) \,,\, \s{g}_{(2)}(X)\,,\, \s{g}_{(3)}(X)\,\in\, R[X]$ such 
that\hypertarget{is}{}
\[
\s{g}_{(k)}(X) \;\equiv_{\pi^{s-t'}} \;g_{(k)}(X) \hsp{2} \text{for $k \in [1,3]$} \; ,\hsp{3}         
 f(X)          \;\equiv_{\pi^{2(s - t')}} \; \s{g}_{(1)}(X)\cdot\s{g}_{(2)}(X)\cdot\s{g}_{(3)}(X) \, .  
\leqno \text{(i$'$)}
\]
We can also apply Lemma~\ref{KochHenselStepPraeSp}.(1) first to the factorisation $f(X) \equiv_{\pi^{s}} g_{(1)}(X)\cdot (g_{(2)}(X)\cdot g_{(3)}(X))$ and
then to the resulting factorisation of the second factor into $g_{(2)}(X)\cdot g_{(3)}(X)$ modulo a certain power of $\pi$. 

We have $s \geq 2t+1 \geq 2t_1+1 \geq t_1 + t'_1 + 1$. So Lemma~\ref{KochHenselStepPraeSp}.(1) gives monic polynomials 
$\s{h}_{(1)}(X) \,,\, \s{h}_{(2)}(X) \,\in\,R[X]$ such that
\[
 \s{h}_{(1)}(X) \; \equiv_{\pi^{s-t'_1}} \; g_{(1)}(X) \; , \;\;
 \s{h}_{(2)}(X) \; \equiv_{\pi^{s-t'_1}} \; g_{(2)}(X)\cdot g_{(3)}(X)  \;,\;\;    
 f(X)           \; \equiv_{\pi^{2(s - t'_1)}}  \; \s{h}_{(1)}(X) \cdot \s{h}_{(2)}(X) \;. 
\]
We have $s\geq 2t+1 \geq 2t - m_{(2)} - m_{(1)} + 3 - t_1 = (2t_0 - m_{(2)} + 1) + (t_1 - m_{(1)} + 1) + 1 
= (t_0 + t'_0) + t'_1 + 1$ and thus $s - t'_1 \ge t_0 + t'_0 + 1$. So Lemma~\ref{KochHenselStepPraeSp}.(1) gives
monic polynomials $\s{\s{g}}_{(2)}(X)\,,\,\s{\s{g}}_{(3)}(X)\,\in\,R[X]$ such that 
\[
 \begin{array}{c}
 \s{\s{g}}_{(2)}(X)  \;\equiv_{\pi^{(s - t'_1) - t'_0}} \; g_{(2)}(X) \; , \hsp{3}
 \s{\s{g}}_{(3)}(X)  \;\equiv_{\pi^{(s - t'_1) - t'_0}} \; g_{(3)}(X) \; , \hsp{3} \\
 \s{h}_{(2)}(X) \; \equiv_{\pi^{2((s - t'_1) - t'_0)}}\;  \s{\s{g}}_{(2)}(X) \cdot \s{\s{g}}_{(3)}(X) \; .
\end{array}
\]
Altogether, the two subsequent applications of Lemma~\ref{KochHenselStepPraeSp}.(1) for two factors yield\hypertarget{iis}{}
\[
\begin{array}{l}
 \s{h}_{(1)}(X)      \;\equiv_{\pi^{s - t'_1}}\;  g_{(1)}(X) \; , \;\;
 \s{\s{g}}_{(2)}(X)  \;\equiv_{\pi^{s - t'_1 - t'_0}} \;  g_{(2)}(X) \; , \;\;
 \s{\s{g}}_{(3)}(X)  \;\equiv_{\pi^{s - t'_1 - t'_0}} \;  g_{(3)}(X) \\
f(X) \;  \equiv_{\pi^{2(s - t'_1)}} \;\s{h}_{(1)}(X) \cdot \s{h}_{(2)}(X)        
     \; \equiv_{\pi^{2(s - t'_1 - t'_0)}} \; \s{h}_{(1)}(X) \cdot \s{\s{g}}_{(2)}(X) \cdot \s{\s{g}}_{(3)}(X)   \;.  \\
\end{array}
\leqno\text{(ii$'$)}
\]
Comparing the result (\hyperlink{is}{i$'$}) of Lemma~\ref{KochHenselStepPraeSp}.(1) for three factors with the result (\hyperlink{iis}{ii$'$}) 
of two subsequent applications of Lemma~\ref{KochHenselStepPraeSp}.(1) for two factors, the former method yields a precision of $s - t'$ for the factors and a precision of 
$2(s - t')$ for the product decomposition, the latter method yields a precision of $s - t'_0 - t'_1$ for the factors and a precision of 
$2(s - t'_0 - t'_1)$ for the product decomposition. Since $t' = t'_1 + t'_0 - m_{(1)} < t'_1 + t'_0$, the former method yields a higher precision.
\end{Example}

\section{Examples}
\label{SecEx}

To illustrate Theorem~\ref{KochHensel} we consider some polynomials in the complete discrete valuation ring $\Z_p$ for a prime number $p$.
Given a polynomial in $\Z[X] \subseteq \Z_p[X]$ and a 
factor decomposition in $\Z[X]$ to a certain $p$-adic precision, the method of the proof of Lemma~\ref{KochHenselStepPrae} gives a factor decomposition in $\Z[X]$ to a higher 
$p$-adic precision. We use the notation of Lemma~\ref{KochHenselStepPrae}. 

Write $g_{(k)}(X)=: \!\!\sum\limits_{j\in[0,m_{(k)}]}\!\! c_{(k)j} X^j$ and 
$\s{g}_{(k)}(X) =: \!\!\sum\limits_{j\in[0,m_{(k)}]}\!\! \s{c}_{(k)j} X^j$
for $k \in [1,n]$, where $c_{(k)j}\,,\s{c}_{(k)j}\in\Z$. Write $f(X) - \prod_{k\in[1,n]} g_{(k)}(X) =: \sum\limits_{j\in[0,M]} w_j X^j$, where $w_j\in\Z$.
Let $s$ be the current precision, i.e.
\[
s \; :=\; \min\; \{\, \val_\pi(w_j) \,:\, j\in [0,M]\,\}\; .
\]
In the respective initial step of the examples below, we ensure that $s\ge\val_p(\Delta(f)) + 1$. Write 
\[
 s' \;:=\; \min\; \{\, \val_\pi ( c_{(k)j} - \s{c}_{(k)j} ) \,:\, k \in [1,n],\, j \in [0,m_{(k)}] \,\}\; .           
\]
By Lemma~\ref{KochHenselStepPrae}, we have $s' \ge s - t$. Let the {\it defect} be $s - s'$. The defect is bounded above by $t$. 

If $f(X) \equiv_\pi X^M$ and the degrees of the factors $g_{(k)}(X)$ are sorted increasingly, then the defect $s - s'$ is bounded above
by $t'\,$; cf.\ Lemma~\ref{KochHenselStepPraeSp}.

The following examples have been calculated using {\sc Magma} \bfcit{Magma}.

\begin{Example}
 \label{Ex_u51}\rm
We consider the polynomial 
\[
f(X) = X^3 + X^2 - 2X + 8
\]
at $p = 2\,$; it has been used as an example by {\sc Dedekind} and {\sc Koch}; cf.\ \bfcite{De30}{p.\ 225}, \bfcite{Ko97}{\S 3.12, introduction to \S 4, \S 4.4}.

\begin{tabular}{p{8cm}p{8cm}}

We start with initial precision $s = 3$.
We consider the development of the factors $g_{(1)}(X)$, $g_{(2)}(X)$, $g_{(3)}(X)$ during steps 1 to 6, starting with the initial factorisation during step~1.

\begin{center}
\begin{footnotesize}
\begin{tabular}{|c|l|} \hline

step 1 & $g_{(1)}(X) = X$                                                 \\
        & $g_{(2)}(X) = X + 2$                                             \\
        & $g_{(3)}(X) = X + 7$                                             \\
                                                                   \hline

step 2  & $g_{(1)}(X) = X + 12$                                            \\
        & $g_{(2)}(X) = X + 14$                                            \\
        & $g_{(3)}(X) = X + 7$                                             \\
                                                                   \hline
 step 3 & $g_{(1)}(X) = X + 52$                                            \\
        & $g_{(2)}(X) = X + 54$                                            \\
        & $g_{(3)}(X) = X + 23$                                            \\
                                                                   \hline
 step 4 & $g_{(1)}(X) = X + 980$                                           \\
        & $g_{(2)}(X) = X + 470$                                           \\
        & $g_{(3)}(X) = X + 599$                                           \\
                                                                   \hline
 step 5 & $g_{(1)}(X) = X + 167380$                                        \\
        & $g_{(2)}(X) = X + 224214$                                        \\
        & $g_{(3)}(X) = X + 132695$                                        \\
                                                                   \hline
step 6  & $g_{(1)}(X) = X + 1339592148$                                    \\
        & $g_{(2)}(X) = X - 4836725802$                                    \\
        & $g_{(3)}(X) = X + 3497133655$                                    \\
                                                                   \hline

\end {tabular} 
\end{footnotesize}
\end{center} &

We obtain the following results in the first 10 steps. The defect is bounded above by $t = 1$.

\begin{center}
\begin{footnotesize}
\begin{tabular}{|l|c|c|c|c|} \hline
step  & current      & defect \vsp{-1}        \\ 
      & precision    &        \vsp{-1}        \\
      & $s$          & $s-s'$                 \\
                                                            \hline
 1    & 3            & 1                      \\
                                                            \hline
 2    & 4            & 1                      \\
                                                            \hline
 3    & 6            & 1                      \\
                                                           \hline
 4    & 10           & 1                      \\
                                                           \hline
 5    & 18           & 1                      \\
                                                           \hline
 6    & 34           & 1                      \\
                                                            \hline
 7    & 66           & 1                      \\
                                                            \hline
 8    & 130          & 1                      \\
                                                           \hline
 9    & 258          & 1                      \\
                                                           \hline
10    & 514          & 1                      \\
                                                           \hline
\end{tabular}
\end{footnotesize}
\end{center}

The defect seems to be constant with value~$1$. 
We observe that the defect is maximal. 

Note that in step 1, the precision grows only by~$1$. \\
\end{tabular}
\end{Example}

\begin{Example}
\label{Ex_u37}\rm\indent\vsp{-0.2}

\begin{tabular}{p{10.5cm}p{6cm}}
We consider the polynomial 
\[
f(X) \;=\; X^8 + 3072X^2 + 16384
\]
at $p = 2$. We start with initial precision $s = 103$, for which we have the initial factorisation into the factors
\[
 \begin{array}{rcl}
  g_{(1)}(X)  & =  &  X + 4806835024200164988203597724980             \\
  g_{(2)}(X)  & =  &  X - 4806835024200164988203597724980             \\
  g_{(3)}(X)  & =  &  X^6 - 1093062124198142780466248559984X^4        \\
              &    &  - 4943636030726675686411786481408X^2            \\
              &   & - 4341143474460317541052331090944    .            \\
 \end{array}
\]

&

We obtain the following results in the first 10 steps. The defect is bounded above by $t = 23$.
Since $f(X)\equiv_2 X^8$, the defect is even bounded above by $t' = 22$.

\begin{center}
\begin{footnotesize}
\begin{tabular}{|l|c|c|c|c|} \hline
 step   & current      & defect \vsp{-1}     \\ 
        & precision    &        \vsp{-1}     \\
        & $s$          & $s - s'$            \\
                                             \hline
 1      & 103          & 3                   \\
                                             \hline
 2      & 200          & 4                   \\
                                             \hline
 3      & 392          & 5                   \\
                                             \hline
 4      & 774          & 1                   \\
                                             \hline
 5      & 1546         & 9                   \\
                                             \hline
 6      & 3074         & 3                   \\
                                             \hline
 7      & 6142         & 7                   \\
                                             \hline
 8      & 12270        & 3                   \\
                                             \hline
 9      & 24534        & 7                   \\
                                             \hline
10      & 49054        & 3                   \\
                                             \hline
\end{tabular}
\end{footnotesize}
\end{center}

\end{tabular}
\end{Example}

\begin{Example}
\label{Ex_u41}\rm\indent\vsp{-0.2}

\begin{tabular}{p{10.5cm}p{6cm}}

We consider the polynomial 
\[
f(X) =  X^{10} + 54X - 243
\]
at $p = 3$.
We start with initial precision $s = 46$, for which we have the initial factorisation into the factors
\[
 \begin{array}{rcl}
  g_{(1)}(X)  & =  & X + 1254845291302170687078       \\
  g_{(2)}(X)  & =  & X^3 + 3439114880299728595329X^2  \\
              &    & + 2097912255269159518284X        \\ 
              &    & + 2387878303991212496958         \\
  g_{(3)}(X)  & =  &  X^6 + 4168977948050601813522X^5 \\ 
              &    & + 3414335924445189447372X^4      \\
              &    &  - 469523799801953629710X^3      \\
              &    & - 3733781694469525960542X^2      \\
              &    &  + 2741122263554615006433X       \\
              &    & + 3057293995913895085035  .      \\
 \end{array}
\]

&

We obtain the following results in the first 10 steps. The defect is bounded above by $t = 13$.
Since $f(X)\equiv_3 X^{10}$, the defect is even bounded above by $t' = 10$.

\begin{center}
\begin{footnotesize}
\begin{tabular}{|l|c|c|c|c|} \hline
 step & current   & defect \vsp{-1}    \\ 
      & precision &        \vsp{-1}    \\
      & $s$       & $s - s'$           \\
                                       \hline
 1    & 46        & 3                  \\
                                       \hline
 2    & 86        & 0                  \\
                                       \hline
 3    & 172       & 3                  \\
                                       \hline
 4    & 338       & 2                  \\
                                       \hline
 5    & 672       & 1                  \\
                                       \hline
 6    & 1342      & 2                  \\
                                       \hline
 7    & 2680      & 1                  \\
                                       \hline
 8    & 5358      & 2                  \\
                                       \hline
 9    & 10712     & 1                  \\
                                       \hline
10   & 21422      & 2                  \\
                                       \hline
\end{tabular}
\end{footnotesize}
\end{center}

The defect seems to be eventually periodic.

\end{tabular}
\end{Example}

\parskip0.0ex
\begin{footnotesize}

\end{footnotesize}
\parskip1.2ex

\begin{footnotesize}
\begin{flushright}
Juliane Dei\ss ler\\
University of Stuttgart\\
Fachbereich Mathematik\\
Pfaffenwaldring 57\\
D-70569 Stuttgart\\
juliane.deissler@mint-kolleg.de\\
\end{flushright}
\end{footnotesize}

\end{document}